\newcommand{\projH}{\mathcal{P}(\mathcal{H})}
\def\P{\mathbb{P}}
\def\E{\mathbb{E}}
\def\11{\mathbbm{1}}
\def\ER{Erd\H{o}s-R\'enyi\ }
\def\Eb{\mathbb E}
\newcommand{\Pb}{\mathbb P}
\newcommand{\Qb}{\mathbb Q}
\DeclareMathOperator{\Var}{Var}
\newtheorem{thm}{Theorem}[section]
\newtheorem{proposition}[thm]{Proposition}
\newtheorem{lemma}[thm]{Lemma}
\newtheorem{defn}[thm]{Definition}
\numberwithin{equation}{section}
\title{Detection and Reconstruction of a Random Hypergraph from Noisy Graph Projection}
\author{Shuyang Gong \\ Peking University \and Zhangsong Li \\ Peking University \and Qiheng Xu \\ University of Chicago}
\date{}
\begin{document}

\maketitle

\begin{abstract}
    For a $d$-uniform random hypergraph on $n$ vertices in which hyperedges are included i.i.d.\ so that the average degree in the hypergraph is $n^{\delta+o(1)}$, the projection of such a hypergraph is a graph on the same $n$ vertices where an edge connects two vertices if and only if they belong to a same hyperedge. In this work, we study the inference problem where the observation is a \emph{noisy} version of the graph projection where each edge in the projection is kept with probability $p=n^{-1+\alpha+o(1)}$ and each edge not in the projection is added with probability $q=n^{-1+\beta+o(1)}$. For all constant $d$, we establish sharp thresholds for both detection (distinguishing the noisy projection from an \ER random graph with edge density $q$) and reconstruction (estimating the original hypergraph). Notably, our results reveal a \emph{detection-reconstruction gap} phenomenon in this problem. Our work also answers a problem raised in \cite{BGPY25+}.\footnote{This paper was presented in part at The IEEE International Symposium on Information Theory (ISIT 2026). This version includes complete mathematical proofs and detailed discussions.}
\end{abstract}

\section{Introduction}

Community detection aims to partition a network’s vertices into clusters based on observed noisy associations between them. The most widely studied formalization assumes pairwise interactions: in friendship graphs, social networks, or co-purchase data, edges appear more frequently between vertices of the same latent community \cite{New06}. However, many real-world systems are fundamentally characterized by higher-order interactions. In scientific co-authorship networks, for instance, relationships are intrinsically multi-way, involving groups of researchers rather than isolated pairs \cite{New04}. Such data are naturally modeled by hypergraphs, a perspective that has led to the hypergraph stochastic block model (HSBM), a framework receiving considerable recent attention \cite{GD14, ACKZ15, SZ22, GJ23, DW23, GP23, GP24}.

This work focuses on a particular subproblem within this broad area: recovering multi-vertex association data (i.e., a hypergraph) from the pairwise association data (i.e., a graph). A motivating example is a large team collaborating on multiple projects, where the ground truth multi-way association is defined by project membership. However, due to practical limitations, we may only observe email exchanges between pairs of coworkers \cite{KY04}, which can be simplified to a graph where an edge connects any two individuals who communicated. Typically, such data is represented by constructing a graph from pairwise communications, effectively projecting the richer hypergraph onto a pairwise structure. Since such a projection generally loses information, a key question arises: to what extent can we reconstruct the original hypergraph from its (potentially noisy) graph projection, under the assumption that vertices within the same hyperedge exhibit higher edge density.

The problem of reconstruction from the projected graph has been extensively studied in recent years, both theoretically and empirically. For instance, \cite{WK24} assumes access to a sample of a hypergraph from the underlying distribution and proposes a scoring method to select hyperedges based on their similarity to the sampled hypergraph. Another Bayesian algorithmic approach (i.e., to sample from the posterior distribution given the projected graph) was proposed in \cite{YPP21, LYA23}. In addition, in \cite{CFLL24} a large foundational model is used to recover a weighted hypergraph from a sample of its hyperedges, where each hyperedge is assigned a probability proportional to its weight. In a series of recent progress \cite{BGP24, BGPY25+}, the sharp threshold for reconstructing the hypergraph based on its projected graph in the \emph{noiseless} setting was determined. They further raised the question of whether an analogous reconstruction threshold can be established in the noisy setting. Determining the reconstruction threshold in the noisy setting presents fundamental challenges absent in the noiseless case. In the noiseless model, the projected graph is a deterministic function of the latent hypergraph since every hyperedge manifests as a clique. The problem thus reduces to combinatorially identifying these cliques. In contrast, the noisy setting introduces two layers of uncertainty: hyperedges may fail to generate all their constituent pairwise edges (false negatives), while random background edges can connect vertices not sharing a hyperedge (false positives). This stochasticity destroys the clean clique structure, turning the problem into one of distinguishing subtle statistical deviations in edge density from a random baseline. Consequently, new techniques are required to prove sharp thresholds in the presence of this two-sided noise, making the problem significantly more intricate.

In this work, we not only settle the value of the reconstruction threshold for all $d$, but also establish the value of the (generally higher) threshold for detection, in which the algorithm only wants to detect the existence of group structure. In addition to finding the value of the threshold, we also establish a so-called ``detection-reconstruction gap'' for our problem, a phenomenon where the reconstruction task is strictly harder than detection. This gap, observed in other fundamental statistical problems \cite{DJ04, MST19}, is characterized by a distinct separation between the parameter regimes where detection is possible and where partial reconstruction becomes feasible.

\section{Model definition and main results}

We pose the model as follows: for any constant $d \in \mathbb N$, we denote by $\operatorname{U}=\operatorname{U}_{n,d}$ the collection containing all subsets of $[n]=\{ 1,\ldots,n \}$ with cardinality $d$.  

\begin{defn}{\label{def-noisy-projection}}
    Define $\mathcal H$ to be the random $d$-uniform hypergraph on $[n]$ generated by including each $\Psi\in\operatorname{U}_{n,d}$ in $\mathcal H$ independently with probability $s=n^{-(d-1)+\delta+o(1)}$ for some constant $0\leq \delta<1$\footnote{For any two positive sequences $\{a_n\}$ and $\{b_n\}$, we write equivalently $a_n=O(b_n)$, $b_n=\Omega(a_n)$, $a_n\lesssim b_n$ and $b_n\gtrsim a_n$ if there exists a positive absolute constant $c$ such that $a_n/b_n\leq c$ holds for all $n$. We write $a_n=o(b_n)$, $b_n=\omega(a_n)$, $a_n\ll b_n$, and $b_n\gg a_n$ if $a_n/b_n\to 0$ as $n\to\infty$. We write $a_n =\Theta(b_n)$ if both $a_n=O(b_n)$ and $a_n=\Omega(b_n)$ hold. Without further specification, all asymptotics are taken with respect to $n \to \infty$.}. We denote the law of $\mathcal H$ by $\mu=\mu_{n,d,\delta}$. For simplicity, we will also use $\mathcal{H}$ to denote the set of hyperedges and $|\mathcal{H}|$ the cardinality of $\mathcal{H}$. In addition, for any $d$-uniform hypergraph $\mathcal H$, define $\mathcal P(\mathcal H)$ to be the projection graph on $[n]$ where for all unordered pairs $(i,j)$ with $i\neq j$
    \begin{align}
         (i,j) \in \mathcal P(\mathcal H) \Longleftrightarrow \exists \Psi\in\mathcal H \mbox{ such that } i,j \in\Psi\,.  \label{eq-def-projection}
    \end{align}    
    Consider the random graph $A \triangleq A(\mathcal{H})$ on $[n]$ obtained according to the following rule (denoted by $\Pb=\Pb_{n,d,\delta,\alpha,\beta}$): we first sample $\mathcal H \sim \mu$. Then (conditioned on $\mathcal H$)
    \begin{itemize}
        \item for each $(i,j) \in \mathcal P(\mathcal H)$, we independently include $(i,j)$ in $A$ with probability $p=n^{-1+\alpha+o(1)}\leq 1$ for some $0\leq\alpha\leq 1$;
        \item for each $(i,j) \not\in \mathcal P(\mathcal H)$, we independently include $(i,j)$ in $A$ with probability $q=n^{-1+\beta+o(1)}\leq p$.
    \end{itemize}
\end{defn}
We require the condition $q \le p$, reflecting the natural expectation that edges in $\mathcal{P}(\mathcal{H})$ tend to appear in the graph $A$. Two natural inference problems regarding this model are as follows: (1) the detection problem, i.e., testing $\Pb$ against $\Qb=\Qb_{n,q}$, where $\Qb$ is the law of an \ER graph on $[n]$ with edge density $q=n^{-1+\beta+o(1)}$; (2) the reconstruction problem, i.e., estimating the original hypergraph $\mathcal H$ from the observation $A(\mathcal H)$. 

In a series of recent progress \cite{BGP24, BGPY25+}, the sharp threshold for reconstructing the hypergraph $\mathcal H$ based on $A(\mathcal H)$ in the \emph{noiseless} setting $p=1,q=0$ was determined. In particular, it was shown in \cite{BGPY25+} that when $\delta>\tfrac{d-1}{d+1}$ it is information theoretically impossible to reconstruct a positive fraction of $\mathcal H$ and when $\delta<\tfrac{d-1}{d+1}$ there exists an efficient algorithm that reconstructs a $1-o(1)$ fraction of $\mathcal H$. They further raised the question of whether an analogous reconstruction threshold can be established in the noisy setting. Our main result can be informally summarized as follows:
\begin{thm}{\label{MAIN-THM-informal}}
    \begin{enumerate}
        \item[(1)] (Detection) Suppose $\beta>2(\delta+\alpha)-1$. Then it is information theoretically impossible to distinguish $\Pb$ from $\Qb$. Conversely, if $\beta<2(\delta+\alpha)-1$, then there exists an efficient algorithm to distinguish $\Pb$ from $\Qb$. 
        \item[(2)] (Partial reconstruction) Suppose either $p=o(1)$ or $\delta>\tfrac{d-1}{d+1}$ or $\beta>\tfrac{2\delta}{d(d-1)}+\tfrac{d-2}{d}$. Then it is informationally impossible to reconstruct a positive fraction of $\mathcal H$. Conversely, if $p=\Omega(1),\delta<\tfrac{d-1}{d+1}$ and $\beta<\tfrac{2\delta}{d(d-1)}+\tfrac{d-2}{d}$, then there exists an efficient algorithm that successfully reconstructs a positive fraction of $\mathcal H$. 
        \item[(3)] (Almost-exact reconstruction) Suppose either $p=1-\Omega(1)$, $\delta>\tfrac{d-1}{d+1}$ or $\beta>\tfrac{2\delta}{d(d-1)}+\tfrac{d-2}{d}$. Then it is informationally impossible to reconstruct a $1-o(1)$ fraction of $\mathcal H$. Conversely, if $p=1-o(1),\delta<\tfrac{d-1}{d+1}$ and $\beta<\tfrac{2\delta}{d(d-1)}+\tfrac{d-2}{d}$, then there exists an efficient algorithm that successfully reconstructs a $1-o(1)$ fraction of $\mathcal H$.
    \end{enumerate}
\end{thm}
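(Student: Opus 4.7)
The theorem groups three detection-type problems---global detection, partial reconstruction, and almost-exact reconstruction---and for each I would match an efficient positive result against an information-theoretic impossibility. The algorithmic side always rests on a simple statistic: \emph{edge counts} for detection and \emph{$d$-clique counts on specific $d$-tuples} for reconstruction. The impossibility side uses a second-moment argument for detection and a planted-versus-planted comparison for reconstruction.

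\textbf{Detection (Part 1).} For the algorithm I take the edge-count statistic $T(A)=\sum_{i<j}A_{ij}$. Under $\Qb$ we have $\Eb_\Qb[T]=\binom{n}{2}q$ with standard deviation $\Theta(n^{(1+\beta)/2})$, while under $\Pb$ the mean excess is $|\mathcal P(\mathcal H)|(p-q)$ of order $n^{\delta+\alpha}$ since $\Eb|\mathcal P(\mathcal H)|\asymp n^{1+\delta}$. These two scales cross precisely at $\beta=2(\delta+\alpha)-1$. For the converse I would bound $\Eb_\Qb[L^2]$ where $L=\mathrm d\Pb/\mathrm d\Qb$; after writing $L$ as an expectation over two independent samples $\mathcal H,\mathcal H'\sim\mu$, the leading contribution reduces to a factor of
\[\exp\Bigl(\Theta(1)\cdot\Eb|\mathcal P(\mathcal H)\cap\mathcal P(\mathcal H')|\cdot\tfrac{p^2}{q}\Bigr),\]
and a direct count gives $\Eb|\mathcal P(\mathcal H)\cap\mathcal P(\mathcal H')|\asymp n^{2\delta}$, so the exponent equals $n^{2\delta+2\alpha-1-\beta}$, which is $o(1)$ precisely when $\beta>2(\delta+\alpha)-1$. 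A truncation to the typical event $|\mathcal H|=n^{1+\delta+o(1)}$ handles the rare-event corrections.

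\textbf{Reconstruction (Parts 2 and 3).} Given $A$, set $\widehat{\mathcal H}$ to be the collection of $d$-subsets $\Psi\subset[n]$ whose induced $\binom{d}{2}$ pairs all appear in $A$ (i.e.\ $\Psi$ forms a $d$-clique in $A$). When $p=\Omega(1)$ a constant fraction of true hyperedges survive the noise with probability $p^{\binom{d}{2}}$, producing $\asymp n^{1+\delta}$ true positives. The expected number of spurious $d$-cliques from $d$-tuples that do not sit inside any hyperedge is $\asymp n^d q^{\binom{d}{2}}=n^{d+\binom{d}{2}(\beta-1)}$, and the true signal dominates the spurious count iff $\beta<\tfrac{d-2}{d}+\tfrac{2\delta}{d(d-1)}$. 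The side condition $\delta<(d-1)/(d+1)$ arises because, even in the noiseless case, distinct hyperedges can induce the same $d$-clique in $\mathcal P(\mathcal H)$; the threshold of \cite{BGPY25+} guarantees that only a vanishing fraction of hyperedges suffer this ambiguity. In the almost-exact regime $p=1-o(1)$ each true hyperedge keeps all of its projection edges with probability $1-o(1)$, and a refined moment calculation then upgrades recovery from a positive fraction to $1-o(1)$.

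\textbf{Impossibility and main obstacle.} For the reconstruction lower bounds the three obstructions play distinct roles: $p=o(1)$ (resp.\ $p=1-\Omega(1)$) forces most (resp.\ a positive fraction of) true hyperedges to lose a projection edge and hence become unidentifiable in principle; $\delta>(d-1)/(d+1)$ reduces immediately to the noiseless result of \cite{BGPY25+} by noting that any estimator in our noisy model would also work in that strictly easier setting. The main technical hurdle is the impossibility at $\beta>\tfrac{d-2}{d}+\tfrac{2\delta}{d(d-1)}$: here I would fix a typical planted hyperedge $\Psi$ and show that the posterior over $\Psi$, conditioned on the observed $A$, spreads mass on $\omega(1)$ competing $d$-tuples whose edges also all appear in $A$ purely by chance. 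Making this quantitative requires a planted-versus-planted comparison with careful bookkeeping of dependencies among overlapping hyperedges---essentially a small-subgraph conditioning argument in the spirit of contiguity proofs---and this is where I expect the proof to be most delicate, particularly near the corner $\delta\to(d-1)/(d+1)$ where true and spurious cliques both proliferate.
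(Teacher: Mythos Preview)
Your overall architecture matches the paper's, but two of your impossibility sketches have genuine gaps.

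For the detection lower bound, writing the second moment as $\mathbb E_{\Qb}[L^2]=\mathbb E_{\mathcal H,\mathcal H'}[\exp(c\cdot Y)]$ with $Y=|E(\mathcal P(\mathcal H))\cap E(\mathcal P(\mathcal H'))|$ and $c\asymp p^2/q$ is correct, but what you then need is control of the \emph{moment generating function} of $Y$, not merely $c\,\mathbb E[Y]=o(1)$. Truncating to $|\mathcal H|=n^{1+\delta+o(1)}$ does not achieve this: even after such conditioning, $Y$ is a sum of dependent indicators, and a bounded-differences bound only gives fluctuations of order $n^{(1+\delta)/2}$, which is too coarse to close the argument at the sharp threshold. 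The paper instead applies Talagrand's inequality, using that $Y/\binom{d}{2}$ is Lipschitz and $f$-certifiable with $f(s)=O(s)$, to obtain sub-Gaussian tails at scale $\sqrt{\mathbb E[Y]}=n^{\delta}$; this is precisely what is needed for $\mathbb E[\exp(cY)]=1+o(1)$ when $\beta>2(\delta+\alpha)-1$.

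For the reconstruction lower bound in the regime $\beta>\tfrac{d-2}{d}+\tfrac{2\delta}{d(d-1)}$, your plan to ``fix a typical planted hyperedge $\Psi$ and show the posterior over $\Psi$ spreads over $\omega(1)$ competitors'' is not the right framing: the posterior is a joint law on all of $\mathcal H$, and a single hyperedge may well be determined \emph{conditionally on the rest of $\mathcal H$} even when $\mathcal H$ as a whole is not. The paper instead works globally, showing that if $\mathcal H,\mathcal H'$ are two independent draws from the posterior given $A$ then $\Pb'(|\mathcal H\cap\mathcal H'|\geq\iota s\tbinom{n}{d})=o(1)$, via a Cauchy--Schwarz decoupling followed by enumeration over the intersection size $I=|\mathcal H\cap\mathcal H'|$ and the excess edge-overlap $J$ (with another Talagrand-type tail bound on $J$); no small-subgraph conditioning is used. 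Two smaller points: your false-positive count $n^{d}q^{\binom{d}{2}}$ for the clique estimator ignores $d$-tuples \emph{partially} covered by other hyperedges, which the paper handles via a decomposition over the family $\mathcal U=\{\Phi\cap\Psi:\Phi\in\mathcal H,\,|\Phi\cap\Psi|\geq 2\}$; and for $p=o(1)$ the paper turns ``unidentifiable in principle'' into a rigorous bound through a generalized Fano inequality rather than a direct argument.
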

Notably, our result reveals a \emph{detection-reconstruction gap} phenomenon in this problem where when $p=\Theta(1)$ (which implies that $\alpha=1$), detection is possible for all $\beta,\delta\in(0,1)$ but (partial) reconstruction is possible if and only if $\delta<\tfrac{d-1}{d+1}$ and $\beta<\tfrac{2\delta}{d(d-1)}+\tfrac{d-2}{d}$. See Figure~\ref{fig:summary} for a summary of the regions where detection/reconstruction is possible. The reader may wonder whether the detection-recovery gap could be eliminated by choosing an alternative null distribution whose edge density matched that of the planted model $\Pb$. We show in Appendix~\ref{sec:alternative-detection-prob} that this adjustment closes the gap partially but not entirely. In particular, detection remains possible \emph{at least} whenever $\delta,\beta<\tfrac{d-1}{d+1}$, while for $\delta<\tfrac{d-1}{d+1}$ we always have $\tfrac{d-1}{d+1}>\tfrac{d-2}{d}+\tfrac{2\delta}{d(d-1)}$. 
\begin{figure}[ht!]
    \centering
    \includegraphics[height=9cm,width=11.5cm]{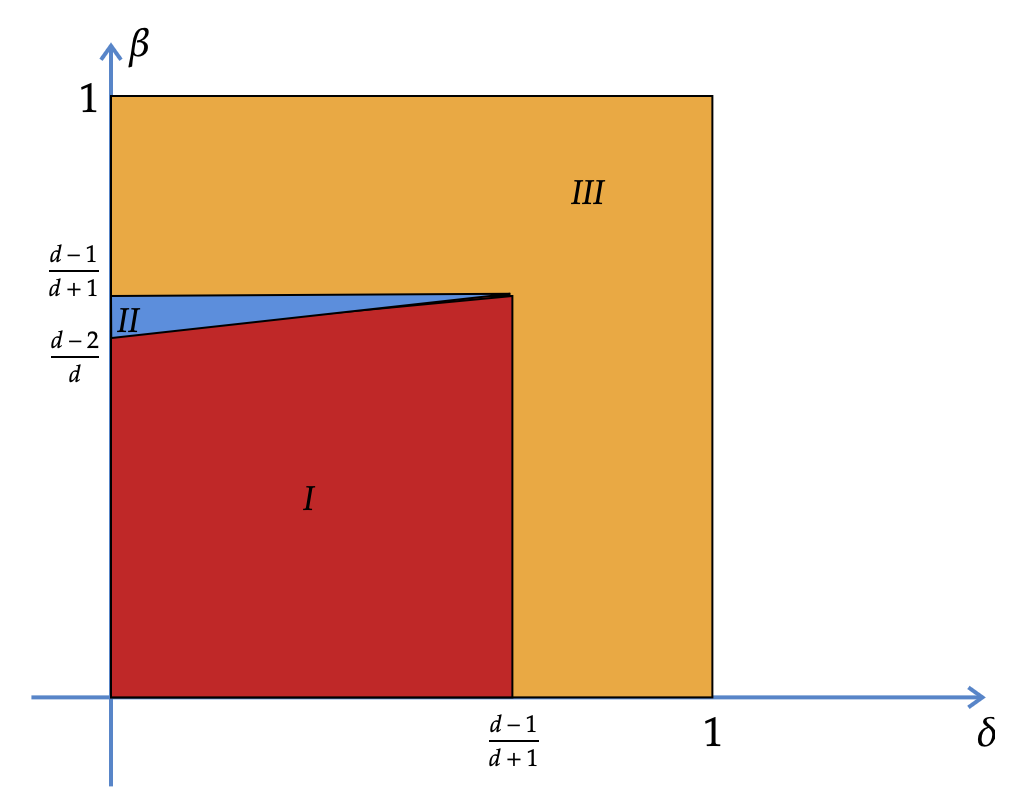}
    \caption{Summary of the detection-reconstruction gap when $p=\Theta(1)$. Red regime $I$: reconstruction of $\mathcal H$ is possible; Blue regime $II$: detection with a better null distribution $\widetilde{\Qb}$ (that matches the average edge-density) is possible, but reconstruction of $\mathcal H$ is impossible; Yellow regime $III$: detection with the simple null distribution $\Qb$ is possible, but detection with a better null distribution $\widetilde{\Qb}$ is unknown.}
    {\label{fig:summary}}
\end{figure}

\subsection{The detection threshold}

In this subsection, we state precisely the results that establish Item~(1) of Theorem~\ref{MAIN-THM-informal}.
\begin{defn}{\label{def-strong-weak-detection}}
    For two sequences of probability measures $\Pb=\Pb_n$ and $\Qb=\Qb_n$ on the same sample $X=X_n$. A test statistic $f=f(X)\in\{ 0,1 \}$ is said to achieve
    \begin{itemize}
        \item Strong detection, if $\Pb(f(X)=0)+\Qb(f(X)=1)=o(1)$ as $n\to\infty$.
        \item Weak detection, if $\Pb(f(X)=0)+\Qb(f(X)=1)=1-\Omega(1)$ as $n\to\infty$.
    \end{itemize}
\end{defn}
We first show that when $\beta<2(\delta+\alpha)-1$, there exists a test statistic that is efficient and achieves strong detection. Consider the following edge-counting statistic 
\begin{equation}{\label{eq-def-edge-counting}}
    f=f(A) = \sum_{1\leq i<j\leq n} \mathbf{1}_{\{(i,j)\in A\}} \,.
\end{equation}
It is clear that $f$ can be computed in time $O(n^{2})$.
\begin{proposition}{\label{prop-detect-alg}}
    Suppose that $\beta<2(\delta+\alpha)-1$. Then we have 
    \begin{equation}{\label{eq-success-edge-count}}
    \begin{aligned}
        &\Pb\Big( f(A) > q \tbinom{n}{2} + \tfrac{1}{2} ps\cdot \tbinom{d}{2} \tbinom{n}{d}\Big) = 1-o(1) \,; \\
        &\Qb\Big( f(A) < q \tbinom{n}{2} + \tfrac{1}{2} ps\cdot \tbinom{d}{2} \tbinom{n}{d} \Big) = 1-o(1) \,.
    \end{aligned}
    \end{equation}
    In particular, the thresholding statistic on $f(A)$ achieves strong detection between $\Pb$ and $\Qb$.
\end{proposition}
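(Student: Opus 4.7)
The plan is to apply the second-moment method to $f(A)$: I compute the means and variances of $f$ under both $\Pb$ and $\Qb$, then conclude via Chebyshev's inequality. Under $\Qb$ we immediately have $\Ex_\Qb[f]=q\binom{n}{2}$ and $\Var_\Qb(f)\le q\binom{n}{2}=O(n^{1+\beta})$. Under $\Pb$, let $X_{ij}=\mathbf{1}\{(i,j)\in\mathcal P(\mathcal H)\}$; conditional on $\mathcal H$, the indicator $A_{ij}$ is Bernoulli with mean $q+(p-q)X_{ij}$. Since $s\binom{n-2}{d-2}=n^{-1+\delta+o(1)}=o(1)$, inclusion--exclusion gives $\Pb(X_{ij}=1)=(1+o(1))s\binom{n-2}{d-2}$, and using the identity $\binom{n}{2}\binom{n-2}{d-2}=\binom{d}{2}\binom{n}{d}$ one obtains $\Ex_\Pb[f]=q\binom{n}{2}+(1+o(1))(p-q)s\binom{d}{2}\binom{n}{d}$. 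Both gaps -- between $\Ex_\Qb[f]$ and the stated threshold, and between the threshold and $\Ex_\Pb[f]$ -- will be of order $ps\binom{d}{2}\binom{n}{d}=\Theta(n^{\alpha+\delta})$ in the intended regime $q\ll p$.

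The main technical step will be bounding $\Var_\Pb(f)$. Because the $A_{ij}$'s are conditionally independent given $\mathcal H$, the law of total covariance yields $\operatorname{Cov}_\Pb(A_{ij},A_{kl})=(p-q)^2\operatorname{Cov}(X_{ij},X_{kl})$ for $(i,j)\ne(k,l)$. To bound the latter I set $U=\{i,j\}\cup\{k,l\}$ and $B=\{\Psi\in\operatorname{U}_{n,d}:U\subseteq\Psi\}$; conditional on no $\Psi\in B$ being in $\mathcal H$, the indicators $X_{ij}$ and $X_{kl}$ depend on disjoint collections of hyperedge indicators and are therefore independent, so
\begin{equation*}
\operatorname{Cov}(X_{ij},X_{kl}) \;\le\; \Pb\bigl(\exists\,\Psi\in B,\,\Psi\in\mathcal H\bigr) \;\le\; s\binom{n-|U|}{d-|U|}\,.
\end{equation*}
Summing over the $\Theta(n^4)$ pairs with $|U|=4$ and $\Theta(n^3)$ pairs with $|U|=3$ gives $\sum_{(i,j)\ne(k,l)}\operatorname{Cov}(X_{ij},X_{kl})=O(sn^d)=O(n^{\delta+1})$, while the diagonal contribution is at most $\binom{n}{2}\Ex_\Pb[A_{ij}]=O(n^{1+\beta}+n^{\alpha+\delta})$. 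Altogether,
\begin{equation*}
\Var_\Pb(f) \;=\; O\bigl(n^{1+\beta}+n^{\alpha+\delta}+p^2 n^{\delta+1}\bigr) \;=\; O\bigl(n^{1+\beta}+n^{\alpha+\delta}+n^{2\alpha+\delta-1}\bigr)\,.
\end{equation*}

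The squared signal $\Theta(n^{2(\alpha+\delta)})$ then dominates each of these three variance terms precisely under the hypothesis $\beta<2(\alpha+\delta)-1$; the auxiliary conditions $\alpha+\delta>0$ and $\delta>-1$ follow automatically from $\beta\ge 0$ together with the main hypothesis. Chebyshev's inequality applied under each of $\Pb$ and $\Qb$ then delivers \eqref{eq-success-edge-count}, and hence strong detection. The hard part will be the covariance estimate: I have to show that the correlations among the $X_{ij}$'s induced by shared hyperedges do not inflate $\Var_\Pb(f)$ beyond the squared signal level. Once the key inequality $\operatorname{Cov}(X_{ij},X_{kl})\le s\binom{n-|U|}{d-|U|}$ is in hand, the remainder of the argument is routine polynomial-scaling bookkeeping.
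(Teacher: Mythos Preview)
Your argument is correct. The paper takes a slightly different route under $\Pb$: instead of bounding $\Var_{\Pb}(f)$ directly, it first shows that $|E(\mathcal P(\mathcal H))|$ concentrates in the window $(0.9,1.1)\cdot s\tbinom{d}{2}\tbinom{n}{d}$ (by controlling, via a first-moment bound, the number of edges covered by two or more hyperedges), and then, conditionally on $\mathcal H$, writes $f(A)$ as a sum of two independent binomials and applies a standard tail estimate. Your approach is more direct --- a single application of Chebyshev built on the law-of-total-covariance identity $\operatorname{Cov}_{\Pb}(A_{ij},A_{kl})=(p-q)^2\operatorname{Cov}(X_{ij},X_{kl})$ and the clean bound $\operatorname{Cov}(X_{ij},X_{kl})\le s\tbinom{n-|U|}{d-|U|}$ (which is indeed valid: on the event that no hyperedge contains all of $U$, the two indicators depend on disjoint collections of independent Bernoullis, and a short computation gives $\operatorname{Cov}(X_{ij},X_{kl})=\Pb(E^c)(1-\Pb(E^c))(1-\E[X_{ij}\mid E])(1-\E[X_{kl}\mid E])\le \Pb(E^c)$). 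The paper's two-stage decomposition has the side benefit of producing a reusable structural event $\mathcal E$ on $\mathcal H$ that reappears in later sections, whereas your argument is self-contained. Both proofs implicitly need $p-q$ to be comparable to $p$ so that the gap between $\E_{\Pb}[f]$ and the threshold is of order $ps\tbinom{d}{2}\tbinom{n}{d}$; you flag this explicitly (``the intended regime $q\ll p$''), while the paper uses it tacitly in its final displayed inequality.
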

The proof of Proposition~\ref{prop-detect-alg} is incorporated in Section~\ref{sec:proof-prop-1.4}. We now turn to showing that whenever $\beta>2(\delta+\alpha)-1$, \emph{no} test, strong or weak, can reliably distinguish $\mathbb P$ from $\mathbb Q$. It is well-known that the testing error is governed by the total variation distance between the null and alternative distributions. Our next result establishes the desired bound on this total variation distance in the regime $\beta>2(\delta+\alpha)-1$.
\begin{proposition}{\label{prop-bound-TV}}
    When $\beta>2(\delta+\alpha)-1$, we have $\operatorname{TV}(\Pb,\Qb)=o(1)$. Thus, it is informationally impossible to achieve weak detection between $\Pb$ and $\Qb$.
\end{proposition}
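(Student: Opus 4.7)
The plan is to bound the chi-squared divergence $\chi^2(\Pb\|\Qb)$ and appeal to $\operatorname{TV}(\Pb,\Qb)\le\tfrac{1}{2}\sqrt{\chi^2(\Pb\|\Qb)}$. Writing the likelihood ratio $L(A)=d\Pb/d\Qb(A)=\E_{\mathcal H\sim\mu}[\prod_{e\in\mathcal P(\mathcal H)}(p/q)^{A_e}((1-p)/(1-q))^{1-A_e}]$ and performing the standard edge-by-edge second-moment computation after conditioning on two independent copies $\mathcal H,\mathcal H'\sim\mu$, one obtains
\[
\E_\Qb[L(A)^{2}]=\E_{\mathcal H,\mathcal H'\stackrel{\mathrm{iid}}{\sim}\mu}\bigl[\rho^{X}\bigr],\qquad\rho:=1+\tfrac{(p-q)^{2}}{q(1-q)}=1+n^{-1+2\alpha-\beta+o(1)},
\]
where $X:=|\mathcal P(\mathcal H)\cap\mathcal P(\mathcal H')|$. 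The goal reduces to showing $\E[\rho^{X}]=1+o(1)$.

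Next I would expand $\rho^{X}=\sum_{S}(\rho-1)^{|S|}\mathbf{1}_{S\subseteq\mathcal P(\mathcal H)\cap\mathcal P(\mathcal H')}$ and take expectations, exploiting the independence of $\mathcal H,\mathcal H'$, to obtain
\[
\E[\rho^{X}]-1=\sum_{\emptyset\ne S\subseteq\binom{[n]}{2}}(\rho-1)^{|S|}\,f(S)^{2},\qquad f(S):=\Pb_\mu\bigl(S\subseteq\mathcal P(\mathcal H)\bigr).
\]
Organize the sum by the isomorphism class $H$ of the graph $G_S$ on its support, with parameters $k=|E(H)|$, $v=|V(H)|$, and the hyperedge-cover number $\tau=\tau_d(H)$ (the smallest number of $d$-subsets $T_1,\ldots,T_\tau\subseteq[n]$ such that every edge of $H$ lies in some $\binom{T_i}{2}$). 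A union bound over minimum covers, combined with a count of the $\prod_i\binom{n-v}{d-|V(H)\cap T_i|}$ ways to extend an internal cover pattern to full $d$-subsets, yields the key estimate
\[
f(S)\lesssim_{H}\,s^{\tau}\,n^{d\tau-m(H)},\qquad m(H):=\sum_{i=1}^{\tau}|V(H)\cap T_{i}|\,\ge\,\max(v,\,2\tau),
\]
where the lower bound is elementary: every vertex of $V(H)$ lies in some $T_i$ (so $m\ge v$), and each $T_i$ in a minimum cover must meet $V(H)$ in at least two vertices (so $m\ge 2\tau$); together these give $2m\ge v+2\tau$.

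Combining the cover estimate with the $\le n^{v}$ embeddings per class and $s=n^{-(d-1)+\delta+o(1)}$, and setting $\varepsilon:=\beta-2(\alpha+\delta)+1>0$, the contribution of each class $H$ becomes
\[
(\rho-1)^{k}\sum_{S\cong H}f(S)^{2}\,\le\,n^{k(-1+2\alpha-\beta)+v+2\tau(1+\delta)-2m(H)+o(1)}\,\le\,n^{-k\varepsilon+o(1)},
\]
where the last inequality uses $\tau\le k$, $\delta\ge 0$, and $2m(H)\ge v+2\tau$. The main obstacle is the final summation over isomorphism types and over $k$. For $k$ up to a slowly growing function of $n$, the $(4ek)^{k}$ isomorphism classes with $k$ edges are dominated by the $n^{-k\varepsilon}$ factor, yielding a convergent geometric series whose leading $k=1$ term is $O(n^{-\varepsilon})$. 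For larger $k$ one has to truncate separately, combining the trivial bound $f(S)^{2}\le 1$ with the estimate $\binom{\binom{n}{2}}{k}(\rho-1)^{k}\le(Cn^{2-2\delta-\varepsilon}/k)^{k}$ in the deep tail, and interpolating through the intermediate range via the refined bound $f(S)\le n^{-\tau(1-\delta)}$ implied by $m\ge 2\tau$. The careful stitching of these regimes, and uniform control of the implicit $\lesssim_{H}$ constants (which depend on the minimum-cover combinatorics of $H$), forms the technical core of the argument.
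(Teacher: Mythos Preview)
Your setup coincides with the paper's: both reduce to showing $\E_\Qb[L(A)^2]=\E_{\mathcal H,\mathcal H'}[\rho^{X}]=1+o(1)$ with $X=|E(\mathcal P(\mathcal H))\cap E(\mathcal P(\mathcal H'))|$. The paper, however, does not expand combinatorially. It observes that $X/\tbinom{d}{2}$ is a Lipschitz, certifiable function of the $2\tbinom{n}{d}$ independent hyperedge indicators and applies Talagrand's inequality to obtain the sub-Gaussian tail $\Pb\big(X\ge n^{2\delta+o(1)}+t\,n^{\delta+o(1)}\big)\le e^{-\Omega(t^{2})}$. Since $(\rho-1)\E[X]=n^{-\varepsilon+o(1)}$, integrating the exponential moment against this tail gives $\E[\rho^{X}]=1+o(1)$ in a few lines, with no subgraph enumeration and no isomorphism-class constants to track.

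Your combinatorial route contains a genuine error in the key estimate. The bound $f(S)\lesssim_{H}s^{\tau}n^{d\tau-m(H)}$ with $\tau=\tau_{d}(H)$ the \emph{minimum} cover number is not a valid upper bound: the event $\{S\subseteq\mathcal P(\mathcal H)\}$ does not force a size-$\tau$ cover of $S$ to lie inside $\mathcal H$; the witnessing hyperedges can form a cover of any size up to $k$, and when $\delta>0$ it is the \emph{larger} covers that dominate. For $d=3$ and $S=\{12,23\}$ one has $\tau=1$, $m=3$, so your estimate reads $f(S)\lesssim s=n^{-2+\delta}$, whereas in fact $f(S)=(1+o(1))(sn)^{2}=n^{-2+2\delta+o(1)}$, driven by two distinct hyperedges each covering one edge. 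The $4$-cycle shows the same failure ($\tau=2$, your bound $s^{2}=n^{-4+2\delta}$ versus the truth $n^{-4+4\delta}$). A correct union bound must sum over covers of every size $r\in\{\tau,\ldots,k\}$, and the dominant $r$ together with the accompanying $m_{r}$ depend on the structure of $H$ in a way your inequality $2m\ge v+2\tau$ does not capture. Combined with the admittedly unresolved stitching across the regimes of $k$ and the uncontrolled $\lesssim_{H}$ constants, the argument as written does not close; the paper's concentration approach sidesteps all of these difficulties.
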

The proof of Proposition~\ref{prop-bound-TV} is incorporated in Section~\ref{sec:proof-prop-1.5}. Our strategy is as follows: first we combine the standard Pinsker's inequality and replica's trick to bound $\operatorname{TV}(\Pb,\Qb)$ by $|E(\mathcal P(\mathcal H)) \cap E(\mathcal P(\mathcal H'))|$ where $\mathcal H,\mathcal H'$ are independent copies from $\mu$. Then by an application of Talagrand's concentration inequality we can argue that $|E(\mathcal P(\mathcal H)) \cap E(\mathcal P(\mathcal H'))|$ tightly concentrates around its mean, which yields the desired total variation bound.

\subsection{The reconstruction threshold}

In this subsection, we state precisely the results that confirm Items~(2) and (3) of Theorem~\ref{MAIN-THM-informal}.

\begin{defn}{\label{def-strong-weak-reconstruction}}
    Recall the definition of $s$ in Definition \ref{def-noisy-projection}. A test statistic $\widehat{\mathcal H}=\widehat{\mathcal H}(A)$ is said to achieve
    \begin{itemize}
        \item Partial reconstruction, if $\mathbb E_{\Pb}\big[ |\mathcal H \triangle \widehat{\mathcal H}| \big]=(1-\Omega(1))\cdot s\tbinom{n}{d}$ as $n\to\infty$.
        \item Almost-exact reconstruction, if $\mathbb E_{\Pb}\big[ |\mathcal H \triangle \widehat{\mathcal H}| \big]=o(1)\cdot s\tbinom{n}{d}$ as $n\to\infty$.
    \end{itemize}
\end{defn}
We first show the positive results on partial and almost-exact reconstruction. Consider the ``clique'' estimator 
\begin{equation}{\label{eq-def-widehat-H}}
    \widehat{\mathcal H}:= \big\{ \Psi\in\operatorname{U} : A_{i,j}=1 \mbox{ for all } i,j \in \Psi \big\}\,.
\end{equation}
\begin{proposition}{\label{prop-succeess-widehat-H}}
    Suppose $\delta<\tfrac{d-1}{d+1},\beta<\tfrac{2\delta}{d(d-1)}+\tfrac{d-2}{d}$ and $p=1-o(1)$. We have
    \begin{equation*}
        \mathbb E_{\Pb}\Big[ \big| \widehat{\mathcal H} \triangle \mathcal H \big| \Big] = o(1) \cdot s\tbinom{n}{d}  \,.
    \end{equation*}
    In addition, when $\delta<\tfrac{d-1}{d+1},\beta<\tfrac{2\delta}{d(d-1)}+\tfrac{d-2}{d}$ and $p=\Omega(1)$, we have
    \begin{equation*}
        \mathbb E_{\Pb}\Big[ \big| \widehat{\mathcal H} \triangle \mathcal H \big| \Big] = (1-\Omega(1)) \cdot s\tbinom{n}{d}  \,.
    \end{equation*}
\end{proposition}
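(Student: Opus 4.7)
My plan is to bound $\mathbb{E}_{\Pb}[|\widehat{\mathcal H}\triangle\mathcal H|]$ by analyzing false negatives ($\mathcal H\setminus\widehat{\mathcal H}$) and false positives ($\widehat{\mathcal H}\setminus\mathcal H$) separately. The false-negative term is easy: when $\Psi\in\mathcal H$, every edge of $\binom{\Psi}{2}$ lies in $\mathcal P(\mathcal H)$ and is kept in $A$ independently with probability $p$, so $\Pr[\Psi\in\widehat{\mathcal H}\mid\Psi\in\mathcal H]\geq p^{\binom{d}{2}}$ and $\mathbb{E}[|\mathcal H\setminus\widehat{\mathcal H}|]\leq(1-p^{\binom{d}{2}})\,s\binom{n}{d}$. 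This is $o(s\binom{n}{d})$ under $p=1-o(1)$ and $(1-\Omega(1))\,s\binom{n}{d}$ under $p=\Omega(1)$, supplying the false-negative half of both statements.

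For the false positives, fix $\Psi$ and condition on $\Psi\notin\mathcal H$; the remaining hyperedges are still i.i.d.\ Bernoulli$(s)$. Expanding $\prod_{e\in\binom{\Psi}{2}}(q+(p-q)\mathbf{1}_{e\in\mathcal P(\mathcal H)})$ and bounding $\Pr[S\subseteq\mathcal P(\mathcal H)\mid\Psi\notin\mathcal H]\leq\sum_C s^{|C|}$ by a union bound over collections $C$ of distinct hyperedges $\Phi\neq\Psi$ whose \emph{shadow} $\mathrm{cov}(C):=\bigcup_{\Phi\in C}\binom{\Phi\cap\Psi}{2}$ contains $S$, then swapping sums (the identity $\sum_{S\subseteq T}q^{\binom{d}{2}-|S|}(p-q)^{|S|}=q^{\binom{d}{2}-|T|}p^{|T|}$ collapses the inner sum), one obtains
\begin{equation*}
\Pr[\Psi\in\widehat{\mathcal H},\Psi\notin\mathcal H]\;\leq\;\sum_{C\subseteq\operatorname{U}\setminus\{\Psi\}} s^{|C|}\,p^{|\mathrm{cov}(C)|}\,q^{\binom{d}{2}-|\mathrm{cov}(C)|}.
\end{equation*}
Summing over $\Psi$ and grouping covers by their \emph{type} --- the multiset $(m_i):=(|\Phi_i\cap\Psi|)$ with $2\leq m_i\leq d-1$, and the coverage $c:=|\mathrm{cov}(C)|$ --- and using $s\,n^d=n^{1+\delta+o(1)}$, $p\leq 1$, and the bound $\lesssim n^{d-m}$ on the number of hyperedges with a specified intersection of size $m$ with $\Psi$, the total contribution is $\lesssim n^E$ with $E=d+k(1+\delta)-\sum_i m_i-(\binom{d}{2}-c)(1-\beta)$ and $k=|C|$. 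Thus $\mathbb{E}[|\widehat{\mathcal H}\setminus\mathcal H|]=o(s\binom{n}{d})$ follows from the combinatorial inequality
\begin{equation*}
\sum_i(m_i-1-\delta)+\bigl(\tbinom{d}{2}-c\bigr)(1-\beta)\;>\;d-1-\delta \qquad(\star)
\end{equation*}
holding with uniform positive slack over all valid types.

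The two extreme cases of $(\star)$ recover exactly the thresholds: the empty cover $k=0$ gives $\binom{d}{2}(1-\beta)>d-1-\delta\iff\beta<\tfrac{d-2}{d}+\tfrac{2\delta}{d(d-1)}$, and $k=\binom{d}{2}$ size-$2$ witnesses with $c=\binom{d}{2}$ give $\binom{d}{2}(1-\delta)>d-1-\delta\iff\delta<\tfrac{d-1}{d+1}$. The single ``ideal'' cover $W=\Psi$ (size $d$) would saturate $(\star)$ at equality $d-1-\delta$, but is excluded by the conditioning $\Psi\notin\mathcal H$; this exclusion is what forces strict inequality.

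The main technical obstacle is establishing $(\star)$ uniformly over all intermediate types. I would attack this via LP relaxation: the LHS of $(\star)$ is lower bounded by the value of
\[
\min_{x_W\geq 0,\,y_e\in[0,1]}\;\sum_W x_W(|W|-1-\delta)+(1-\beta)\bigl(\tbinom{d}{2}-\textstyle\sum_e y_e\bigr) \;\text{ s.t. }\; y_e\leq\sum_{W\supseteq e}x_W,\ 2\leq|W|\leq d-1,
\]
which by LP duality equals $\max_{\mu\geq 0}\sum_e\min(\mu_e,1-\beta)$ subject to $\sum_{e\in\binom{W}{2}}\mu_e\leq|W|-1-\delta$ for each allowed $W$. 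Plugging in the uniform candidate $\mu_e\equiv(d-1-\delta)/\binom{d}{2}$, the coverage constraint for $|W|=m$ reduces after simplification to $\delta\leq(m-1)(d-1)/(m+d-1)$, monotone increasing in $m$ and hence tightest at $m=2$ giving $\delta\leq(d-1)/(d+1)$; meanwhile $\mu_e\leq 1-\beta$ reduces to the $\beta$-threshold. Both are exactly our hypotheses, so the LP value is $\geq d-1-\delta$; strictness of the two assumptions allows a small positive perturbation $\mu_e\mapsto\mu_e+\epsilon$ preserving feasibility, yielding value $>d-1-\delta$ with uniform positive slack and hence $(\star)$. The partial-reconstruction statement ($p=\Omega(1)$) follows from the same false-positive bound together with the $(1-\Omega(1))\,s\binom{n}{d}$ false-negative term.
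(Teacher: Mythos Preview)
Your high-level structure matches the paper's: split $|\widehat{\mathcal H}\triangle\mathcal H|$ into false negatives (handled identically via $p^{\binom{d}{2}}$) and false positives, and reduce the latter to a combinatorial inequality of exactly the shape of your $(\star)$. The differences are in two places.

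\emph{Decomposition.} The paper conditions on the set $\mathcal U=\{\Phi\cap\Psi:\Phi\in\mathcal H,\ |\Phi\cap\Psi|\geq 2\}$, which takes only $O_d(1)$ values, and bounds $\Pb(\mathcal U)\cdot\Pb(\Psi\in\widehat{\mathcal H}\mid\mathcal U)$ directly; this is tidier than your expand-and-union-bound over collections $C$ of witness hyperedges. (As literally written, your union bound ``over collections $C$ whose shadow contains $S$'' sums over arbitrarily large $C$ and diverges; you need the implicit restriction $|C|\leq|S|\leq\binom{d}{2}$ coming from choosing one witness per edge. With that, the number of types is $O_d(1)$ and your argument goes through.)

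\emph{The inequality.} The paper verifies the analogue of $(\star)$ by a two-line algebraic trick: substituting the strict bound $1-\beta>\tfrac{2}{d}-\tfrac{2\delta}{d(d-1)}$ and then observing the factorization
\[
\sum_\ell(|A_\ell|-1-\delta)-\Bigl(\tfrac{2}{d}-\tfrac{2\delta}{d(d-1)}\Bigr)\sum_\ell\tbinom{|A_\ell|}{2}
=-\tfrac{1}{d}\sum_\ell(|A_\ell|-d)\Bigl((1-\tfrac{\delta}{d-1})|A_\ell|-(1+\delta)\Bigr)\geq 0,
\]
each summand being nonnegative for $2\leq|A_\ell|\leq d-1$ under $\delta<\tfrac{d-1}{d+1}$. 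Your LP-duality route with the uniform dual $\mu_e\equiv(d-1-\delta)/\binom{d}{2}$ is correct and has the virtue of explaining structurally why exactly these two thresholds arise (they are the binding constraints at $|W|=2$ and at $\mu_e=1-\beta$), but it is more machinery than the paper's direct factorization.
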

The proof of Proposition~\ref{prop-succeess-widehat-H} is incorporated in Section~\ref{sec:proof-prop-1.7}. Next we show the impossibility of weak reconstruction. Note that when $\delta>\tfrac{d-1}{d+1}$, by \cite[Theorem~1.1]{BGPY25+}, it is informationally impossible to achieve weak reconstruction even when $p=1,q=0$. Thus, without loss of generality, we assume that $\delta\leq\tfrac{d-1}{d+1}$.
\begin{proposition}{\label{prop-information-recovery-lower-bound}}
    \begin{itemize}
        \item[(1)] Suppose that $p=o(1)$. Then for any estimator $\widehat{\mathcal H}=\widehat{\mathcal H}(A)$ we have 
        \begin{align*}
            \mathbb E_{\mathcal H,A \sim \Pb}\Big[ \big|\widehat{\mathcal H}(A) \triangle \mathcal H\big| \Big]=(1-o(1)) \cdot s\tbinom{n}{d}
        \end{align*}
        even when $q=0$.
        \item[(2)] Suppose $q=n^{-1+\beta+o(1)}$ where $\beta>\tfrac{2\delta}{d(d-1)}+\tfrac{d-2}{d}$. Then for any estimator $\widehat{\mathcal H}=\widehat{\mathcal H}(A)$ we have 
        \begin{align*}
            \mathbb E_{\mathcal H,A \sim \Pb}\Big[ \big|\widehat{\mathcal H}(A) \triangle \mathcal H\big| \Big]=(1-o(1)) \cdot s\tbinom{n}{d}
        \end{align*}
        even when $p=1$.
    \end{itemize}
\end{proposition}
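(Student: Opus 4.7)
I would use a per-hyperedge \emph{genie / oracle} argument. For each $\Psi\in\operatorname{U}$ I reveal the remaining hypergraph $\mathcal H':=\mathcal H\setminus\{\Psi\}$ to the estimator and write $\pi^{\mathrm{or}}_\Psi(A,\mathcal H'):=\Pb[\Psi\in\mathcal H\mid A,\mathcal H\setminus\{\Psi\}=\mathcal H']$. Since conditioning on more information can only shrink the per-coordinate Bayes risk (standard data-processing / Jensen applied to the concave function $x\mapsto\min(x,1-x)$), for \emph{any} estimator $\widehat{\mathcal H}(A)$,
\[
\mathbb E\big[|\widehat{\mathcal H}\triangle\mathcal H|\big]\;\geq\;\sum_{\Psi\in\operatorname U}\mathbb E\big[\min(\pi^{\mathrm{or}}_\Psi,\,1-\pi^{\mathrm{or}}_\Psi)\big]\;\geq\;s\tbinom{n}{d}\;-\;\sum_{\Psi}\Pb\big[\pi^{\mathrm{or}}_\Psi>\tfrac{1}{2}\big],
\]
where the last bound uses $\mathbb E[\min(\pi,1-\pi)]\geq\mathbb E[\pi]-\Pb[\pi>1/2]=s-\Pb[\pi>1/2]$. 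It therefore suffices to prove $\Pb[\pi^{\mathrm{or}}_\Psi>1/2]=o(s)$ uniformly in $\Psi$.

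\textbf{Likelihood ratio.} Given $\mathcal H'$, partition the $\binom{d}{2}$ pairs of $\Psi$ into \emph{uninformative} pairs lying in $\mathcal P(\mathcal H')$ (for which $A_{ij}\sim\mathrm{Ber}(p)$ regardless of whether $\Psi\in\mathcal H$) and \emph{informative} pairs outside $\mathcal P(\mathcal H')$ (for which $A_{ij}\sim\mathrm{Ber}(p)$ under $\Psi\in\mathcal H$ and $\mathrm{Ber}(q)$ otherwise). Writing $k$ for the number of informative pairs, the oracle likelihood ratio $L_\Psi$ factorises as a product only over those pairs; everything outside $\Psi$'s pairs cancels. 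Since $\Pb[(i,j)\in\mathcal P(\mathcal H')]\asymp n^{\delta-1}=o(1)$ under $\delta<1$, a union bound gives $k=\binom{d}{2}$ with probability $1-o(1)$.

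\textbf{Completing both parts.} For \textbf{Part (1)} with $p=o(1)$ and $q=0$: any informative pair with $A_{ij}=1$ is a sure witness of $\Psi\in\mathcal H$, whereas if no informative pair equals $1$ then $L_\Psi=(1-p)^k\to 1$ and $\pi^{\mathrm{or}}_\Psi\approx s\ll 1/2$. Thus $\{\pi^{\mathrm{or}}_\Psi>1/2\}$ is precisely $\{\exists \text{ informative pair with }A_{ij}=1\}$, which given $\Psi\in\mathcal H$ has probability at most $kp\leq\binom{d}{2}p=o(1)$, and given $\Psi\notin\mathcal H$ has probability $0$. Hence $\Pb[\pi^{\mathrm{or}}_\Psi>1/2]\leq s\cdot\binom{d}{2}p=o(s)$. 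For \textbf{Part (2)} with $p=1$ and $\beta>\tfrac{2\delta}{d(d-1)}+\tfrac{d-2}{d}$: $p=1$ forces $A|_\Psi=K_d$ deterministically under $\Psi\in\mathcal H$, and on this event the informative edges contribute $L_\Psi=q^{-k}$ (uninformative edges are always $1$). Hence $\pi^{\mathrm{or}}_\Psi>1/2\iff q^k<s(1+o(1))\iff k(1-\beta)>(d-1-\delta)+o(1)$. A short algebraic manipulation shows $\beta>\tfrac{2\delta}{d(d-1)}+\tfrac{d-2}{d}$ is equivalent to $(d-1-\delta)/(1-\beta)>\binom{d}{2}$, and the strictness of the hypothesis supplies an $\Omega(1)$ gap; since $k\leq\binom{d}{2}$, the inequality never holds for large $n$, giving $\Pb[\pi^{\mathrm{or}}_\Psi>1/2]=0$.

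\textbf{Main obstacle.} The only delicate point is Part (2), where I need the $n^{\Omega(1)}$ polynomial gap between $q^{\binom{d}{2}}$ and $s$ to absorb the $o(1)$-slack in the exponents of $s$ and $q$; strictness of the hypothesis is exactly what provides this margin. All remaining ingredients---the oracle reduction, the factorisation of $L_\Psi$ into informative/uninformative pairs, and the elementary inequality $\mathbb E[\min(\pi,1-\pi)]\geq s-\Pb[\pi>1/2]$---are routine consequences of Bayes' rule and the data-processing property of Bayes risk.
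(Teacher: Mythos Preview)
Your proof is correct and takes a genuinely different route from the paper. The paper proves Part~(1) via a generalized Fano inequality (bounding the mutual information $I(\mathcal H;A)\le H(A)$ against the log-cardinality of the hypothesis class), and proves Part~(2) by an involved Bayesian argument: it truncates to a typical event, samples $\mathcal H'$ from the posterior, and bounds $\Pb(|\mathcal H\cap\mathcal H'|\ge\iota s\tbinom{n}{d})$ using Cauchy--Schwarz together with a Talagrand-type tail estimate on $|E(\mathcal P(\mathcal H))\cap E(\mathcal P(\mathcal H'))|$. Your genie/oracle decomposition unifies both parts into the same elementary calculation: once $\mathcal H\setminus\{\Psi\}$ is revealed, the test for $\{\Psi\in\mathcal H\}$ factorises over the ``informative'' pairs of $\Psi$, and the single-line likelihood-ratio computation immediately pins down when $\pi^{\mathrm{or}}_\Psi>\tfrac12$. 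In Part~(2) your argument is in fact slightly stronger than needed, since it shows $\Pb[\pi^{\mathrm{or}}_\Psi>\tfrac12]=0$ for large $n$ (not merely $o(s)$), and your proof nowhere requires the side assumption $\delta\le\tfrac{d-1}{d+1}$ that the paper invokes before its Part~(2) argument. What the paper's approach buys in exchange is a more ``global'' picture of the posterior landscape (it directly controls the overlap of two posterior samples), which could be more robust if one wanted to push beyond the pure noise regime or handle correlated hyperedge priors; your coordinate-wise reduction is cleaner here precisely because the hyperedges are i.i.d.\ and the per-edge likelihood is so simple. One cosmetic remark: the sentence about $k=\binom{d}{2}$ with probability $1-o(1)$ is never actually used (both parts work uniformly over $0\le k\le\binom{d}{2}$), so you could drop it.
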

The proof of Proposition~\ref{prop-information-recovery-lower-bound} is given in Section~\ref{sec:proof-prop-1.8}. Item~(1) follows from a suitable generalization of Fano’s inequality, while Item~(2) is established via a Bayesian argument. 

We then show the impossibility of strong reconstruction. In light of Proposition~\ref{prop-information-recovery-lower-bound}, it remains only to prove the following result, whose proof appears in Section~\ref{sec:proof-prop-1.9}.
\begin{proposition}{\label{prop-information-almost-exact-recovery-lower-bound}}
    Suppose that $p=1-\Omega(1)$. Then for any estimator $\widehat{\mathcal H}=\widehat{\mathcal H}(A)$ we have 
    \begin{align*}
        \mathbb E_{\mathcal H,A \sim \Pb}\big[ |\widehat{\mathcal H}(A) \triangle \mathcal H| \big]=\Omega(1) \cdot s\tbinom{n}{d}
    \end{align*}
    even when $q=0$.
\end{proposition}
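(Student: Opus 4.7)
The plan is to reduce the reconstruction lower bound to a per-hyperedge Bayes-risk estimate. For each $\Psi \in \operatorname{U}$, set $Y_\Psi = \mathbf 1_{\Psi \in \mathcal H}$ and $\widehat Y_\Psi = \mathbf 1_{\Psi \in \widehat{\mathcal H}(A)}$. Since $|\widehat{\mathcal H}\triangle \mathcal H| = \sum_\Psi |Y_\Psi - \widehat Y_\Psi|$ and the coordinates $\widehat Y_\Psi$ can be chosen independently as arbitrary functions of $A$, I would first establish
$$
\min_{\widehat{\mathcal H}} \mathbb E_{\Pb}\big[|\widehat{\mathcal H}\triangle \mathcal H|\big] = \sum_{\Psi \in \operatorname{U}} \min_{\widehat Y_\Psi(A)} \mathbb E_{\Pb}\big[|Y_\Psi - \widehat Y_\Psi|\big],
$$
so it suffices to lower-bound each per-coordinate Bayes risk by $\Omega(s)$. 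The mechanism for this is the \emph{isolation} event
$$
\mathcal E_\Psi := \big\{\,\forall\, \Phi \in \mathcal H \setminus\{\Psi\} : |\Phi \cap \Psi| \leq 1\,\big\},
$$
which is measurable with respect to $\mathcal H\setminus\{\Psi\}$ and hence independent of $Y_\Psi$. A direct first-moment bound gives $\Pb(\mathcal E_\Psi^c) \lesssim \binom{d}{2}\binom{n-d}{d-2} s = n^{-1+\delta+o(1)} = o(1)$ since $\delta<1$, so $\Pb(\mathcal E_\Psi)=1-o(1)$.

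On the event $\mathcal E_\Psi$, no hyperedge other than $\Psi$ covers any pair from $\Psi$, so for each pair $(i,j)\subset\Psi$ one has $(i,j)\in\mathcal P(\mathcal H)$ iff $Y_\Psi=1$. Combined with $q=0$, this shows that conditionally on $\mathcal E_\Psi$ and $\mathcal H\setminus\{\Psi\}$, the restriction $A|_\Psi$ (to the $\binom{d}{2}$ pairs inside $\Psi$) is i.i.d.\ Bernoulli$(p)$ if $Y_\Psi=1$ and identically zero if $Y_\Psi=0$; meanwhile $A$ outside $\Psi$ depends only on $\mathcal H\setminus\{\Psi\}$ and the independent edge-noise, hence is independent of $(Y_\Psi, A|_\Psi)$ given $\mathcal H\setminus\{\Psi\}$. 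Marginalizing over $\mathcal H\setminus\{\Psi\}$ yields the key two-point identity: the conditional law of $A$ under $\{Y_\Psi=0,\,\mathcal E_\Psi\}$ coincides with the conditional law of $A$ under $\{Y_\Psi=1,\, A|_\Psi=\mathbf 0,\,\mathcal E_\Psi\}$. Writing $\beta := \Pb(\widehat Y_\Psi = 1 \mid Y_\Psi=0,\mathcal E_\Psi)$, this identity forces $\Pb(\widehat Y_\Psi = 0 \mid Y_\Psi=1,A|_\Psi=\mathbf 0,\mathcal E_\Psi)=1-\beta$, and therefore
\begin{align*}
\mathbb E_{\Pb}\big[|Y_\Psi-\widehat Y_\Psi|\,\big|\,\mathcal E_\Psi\big]
&\geq s\cdot \Pb\big(A|_\Psi=\mathbf 0\,\big|\,Y_\Psi=1,\mathcal E_\Psi\big)\cdot (1-\beta) + (1-s)\beta \\
&= s(1-p)^{\binom{d}{2}}(1-\beta) + (1-s)\beta \;\geq\; s(1-p)^{\binom{d}{2}},
\end{align*}
where the last inequality follows by minimizing the affine function of $\beta\in[0,1]$ (for $s=o(1)$ the coefficient of $\beta$ is positive, so the minimum is at $\beta=0$).

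Multiplying by $\Pb(\mathcal E_\Psi)=1-o(1)$, summing over the $\binom{n}{d}$ subsets, and using the hypothesis $p\leq 1-c$ for some constant $c>0$, I conclude
$$
\min_{\widehat{\mathcal H}} \mathbb E_{\Pb}\big[|\widehat{\mathcal H}\triangle \mathcal H|\big] \;\geq\; (1-o(1))\,c^{\binom{d}{2}}\,s\binom{n}{d} \;=\; \Omega(1)\cdot s\binom{n}{d},
$$
which is the desired bound. The main technical obstacle is the two-point coupling identity: one must carefully verify that $A$ restricted to pairs outside $\Psi$ is conditionally independent of $(Y_\Psi, A|_\Psi)$ given $(\mathcal H\setminus\{\Psi\}, \mathcal E_\Psi)$, so that after integrating out $\mathcal H\setminus\{\Psi\}$ the equality of the two conditional laws survives. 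Everything else reduces to the routine first-moment isolation estimate and a one-variable optimization.
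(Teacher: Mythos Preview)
Your proof is correct and takes a genuinely different route from the paper's. The paper argues via a posterior-sampling reduction: it samples $\mathcal H'$ from $\Pb(\cdot\mid A)$, defines the set $\mathsf{Empty}(\mathcal H;A)$ of hyperedges of $\mathcal H$ that leave no edge in $A$, and builds a large family $\mathsf{Blur}(\mathcal H;A)$ of alternative hypergraphs (replacing the ``empty'' hyperedges arbitrarily) each of which has posterior mass comparable to $\mathcal H$. This yields $\Pb(\mathcal H\mid A)\le\exp(-\Omega(1)\cdot s\tbinom{n}{d}\log n)$ on a typical event, which is then fed into a Cauchy--Schwarz/counting argument to show $\Pb(|\mathcal H\triangle\mathcal H'|\le o(1)\cdot s\tbinom{n}{d})=o(1)$ and hence the expectation lower bound.

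Your approach instead localizes to a single hyperedge $\Psi$ and runs a Le~Cam two-point argument: on the isolation event $\mathcal E_\Psi$ (which holds with probability $1-o(1)$ and is independent of $Y_\Psi$), the laws of $A$ under $\{Y_\Psi=0\}$ and under $\{Y_\Psi=1,\ A|_\Psi=\mathbf 0\}$ coincide, forcing any estimator to pay at least $s(1-p)^{\binom{d}{2}}$ in expected error per coordinate. The conditional-independence step you flag as the main obstacle is indeed the crux, and it goes through cleanly because on $\mathcal E_\Psi$ the projection restricted to pairs outside $\Psi$ is a function of $\mathcal H\setminus\{\Psi\}$ alone, while $A|_\Psi$ depends only on $Y_\Psi$ and fresh noise. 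Your route is more elementary (no posterior ratio computations, no counting of blurred hypergraphs), yields the explicit constant $c^{\binom{d}{2}}$ when $p\le 1-c$, and is the natural argument here; the paper's approach is heavier but follows a template from \cite{DWXY23} that may be preferred for uniformity with their other lower bounds.
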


\section{Proof of Proposition~\ref{prop-detect-alg}}{\label{sec:proof-prop-1.4}}

Recall \eqref{eq-def-edge-counting}. Under $\Qb$, $f(A)$ is the sum of $\tbinom{n}{2}$ i.i.d.\ random variables with law $\operatorname{Ber}(q)$. It is clear that $\Eb_{\Qb}[f(A)]=q\tbinom{n}{2}$ and $\Var_{\Qb}[f(A)]\leq q\tbinom{n}{2}$. By Chebyshev's inequality, we get (also recall that $q=n^{-1+\beta+o(1)}$, $p=n^{-1+\alpha+o(1)}$ and $s=n^{-(d-1)+\delta+o(1)}$)
\begin{equation*}
    \Qb\Big( f(A) \geq q\tbinom{n}{2} + \tfrac{1}{2} ps\cdot \tbinom{d}{2} \tbinom{n}{d} \Big)\leq \frac{\Var_{\Qb}[f(A)]}{\big[\tfrac{1}{2} ps\cdot \tbinom{d}{2} \tbinom{n}{d}\big]^2} = O\big(n^{1+\beta-2(\alpha+\delta+o(1))}\big)=o(1) \,,
\end{equation*}
where the last equality follows from $\beta<2(\alpha+\delta)-1$. Now we focus on the behavior of $f(A)$ under $\Pb$. Note that conditioning on $\mathcal H$, we have (the binomial variables are independent)
\begin{equation}{\label{eq-edge number}}
        |E(A(\mathcal H))| \overset{d}{=} \mathbf B\big( |E(\mathcal P(\mathcal H))|,p \big) + \mathbf B\big( \tbinom{n}{2}-|E(\mathcal P(\mathcal H))|,q \big) \,.
\end{equation}
Thus, we first show 
\begin{align}\label{eq-est-edges-projection}
    \Pb\Big(0.9s\tbinom{d}{2}\tbinom{n}{d}\leq |E(\mathcal P(\mathcal H))|\leq 1.1s\tbinom{d}{2}\tbinom{n}{d} \Big) = 1-o(1) \,.
\end{align}
Let $M_k(\mathcal H)$ be the set of $(i,j)\in\mathcal P(\mathcal H)$ such that there exist at least $k$ hyperedges in $\mathcal H$ that contain $(i,j)$. In addition, define 
\begin{equation}{\label{eq-def-mathcal-E}}
    \mathcal E=\mathcal E_1 \cap \mathcal E_2 \,,
\end{equation}
where
\begin{equation}{\label{eq-def-mathcal-E-1}}
    \mathcal E_1=\Big\{ |\mathcal{H}|=(1+o(1))s\tbinom{n}{d} \Big\}
\end{equation}
and
\begin{equation}{\label{eq-def-mathcal-E-2}}
    \mathcal E_2=\Big\{ \sum_{k \geq 2} k|M_k(\mathcal{H})|=o(1) \cdot s\tbinom{n}{d} \Big\} \,.
\end{equation}
Clearly we have $\Pb_{\mathcal H\sim\mu}(\mathcal E_1)=1-o(1)$. In addition, note that $(i,j)\in M_k(\mathcal H)$ implies that there exist $k$ different $\Psi_1,\ldots,\Psi_k \in\operatorname{U}$ with $\Psi_l \in\mathcal H$ and $(i,j)\in E(\mathcal P(\Psi_l))$ for $1 \leq l\leq k$. Thus, by a simple union bound we get that (note that the enumeration of $k$ different $\Psi_1,\ldots,\Psi_k \in\operatorname{U}$ with $(i,j)\in E(\mathcal P(\Psi_l))$ is bounded by $n^{(d-2)k}$)
\begin{align*}
    \Pb_{\mathcal H\sim\mu}\big( (i,j)\in M_k(\mathcal H) \big) \leq n^{(d-2)k} \cdot s^{k} \leq n^{(-1+\delta+o(1))k} \,.
\end{align*}
This implies that 
\begin{align*}
    \mathbb E_{\mathcal H \sim\mu}\Big[ \sum_{k \geq 2} k|M_k(\mathcal{H})| \Big] \leq \sum_{k \geq 2} kn^2 \cdot n^{(-1+\delta+o(1))k}  = n^{2\delta+o(1)} = o(1) \cdot s\tbinom{n}{d} \,.
\end{align*}
Thus, from a simple Markov inequality we get that $\Pb_{\mathcal H\sim\mu}(\mathcal E_2)=1-o(1)$. Thus, we have
\begin{equation}{\label{eq-mathcal-E-is-typical}}
    \Pb_{\mathcal H\sim\mu}(\mathcal E)=1-o(1) \,.
\end{equation}
Also, on $\mathcal E$ we get from the deterministic bound that
\begin{align*}
    |E(\projH)|\leq \tbinom{d}{2}|\mathcal{H}|\leq 1.1s\tbinom{d}{2}\tbinom{n}{d}
\end{align*}
and
\begin{align*}
    |E(\mathcal P(\mathcal H))| \geq \tbinom{d}{2} |\mathcal H| - \sum_{k \geq 2} k|M_k(\mathcal{H})| \geq 0.9s \tbinom{d}{2}\tbinom{n}{d} \,,
\end{align*}
which yields \eqref{eq-est-edges-projection}. Now conditioning on $\projH$ satisfying the event in \eqref{eq-est-edges-projection}, by \eqref{eq-edge number} and a standard binomial tail estimate, we obtain that with probability $1-o(1)$ under $\Pb$ 
\begin{equation*}
    |E(A(\mathcal H))| \geq q\tbinom{n}{2}+0.9(p-q) s\tbinom{d}{2}\tbinom{n}{d} - \sqrt{(ps\tbinom{d}{2}\tbinom{n}{d}+q\tbinom{n}{2})\log n} \geq q\tbinom{n}{2}+\tfrac{1}{2} ps\tbinom{d}{2}\tbinom{n}{d} \,, 
\end{equation*}
where in the last inequality we used $n^{\frac{1+\beta}{2}}<n^{\delta+\alpha-\Omega(1)}$. As a result, \eqref{eq-success-edge-count} holds.

\section{Proof of Proposition~\ref{prop-bound-TV}}{\label{sec:proof-prop-1.5}}

Let $L(A)=\tfrac{\Pb(A)}{\Qb(A)}$ denote the likelihood ratio. By Pinsker's inequality we have
\begin{align*}
    \operatorname{TV}(\Pb,\Qb) \leq \mathbb E_{\Qb}\big[ L(A)^2 \big]-1 \,.
\end{align*}
Thus, to prove Proposition~\ref{prop-bound-TV}, it suffices to show the following result.
\begin{lemma}{\label{prop-info-detection-lower-bound}}
    When $\beta>2(\delta+\alpha)-1$, we have 
    \begin{align}{\label{eq-goal-detection-lower-bound}}
        \mathbb E_{\Qb}\big[ L(A)^2 \big] = 1+o(1) \,.
    \end{align}
\end{lemma}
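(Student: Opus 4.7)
The plan is to combine Pinsker's inequality with a second-moment computation via the replica trick. Since $\operatorname{TV}(\Pb,\Qb)^2 \leq \chi^2(\Pb \| \Qb) = \mathbb E_{\Qb}[L(A)^2] - 1$, it suffices to prove \eqref{eq-goal-detection-lower-bound}. Expanding $L(A)^2$ by introducing two independent copies $\mathcal H, \mathcal H' \sim \mu$ and integrating over $A \sim \Qb$, each edge contributes a factor of $1$ unless $(i,j) \in \mathcal P(\mathcal H) \cap \mathcal P(\mathcal H')$, in which case the factor becomes $\tfrac{p^2}{q} + \tfrac{(1-p)^2}{1-q} = 1 + \lambda$ where $\lambda := \tfrac{(p-q)^2}{q(1-q)}$. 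This reduces the claim to
\[
\mathbb E_{\mathcal H,\mathcal H' \sim \mu}\!\left[(1+\lambda)^T\right] = 1 + o(1), \qquad T := \bigl|E(\mathcal P(\mathcal H)) \cap E(\mathcal P(\mathcal H'))\bigr|.
\]

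Next I pin down scales. A direct computation gives $\mathbb P((i,j) \in \mathcal P(\mathcal H)) = 1 - (1-s)^{\binom{n-2}{d-2}} = n^{-1+\delta+o(1)}$, hence by linearity $\mathbb E[T] = n^{2\delta+o(1)}$. Since $\lambda \leq p^2/q = n^{-1+2\alpha-\beta+o(1)}$, the hypothesis $\beta > 2(\alpha+\delta)-1$ yields the two consequences
\[
\lambda \cdot \mathbb E[T] = n^{-1+2(\alpha+\delta)-\beta+o(1)} = o(1) \quad \text{and} \quad \lambda = n^{-2\delta-\Omega(1)+o(1)} = o(1).
\]
Both will be used: the first controls the bulk contribution and the second ensures the tail integral converges.

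The key step is a sharp concentration bound for $T$. Viewed as a function on the Bernoulli product space indexing $(\mathbf 1_{\Psi \in \mathcal H})_\Psi$ and $(\mathbf 1_{\Psi \in \mathcal H'})_\Psi$, the function $T$ is $\binom{d}{2}$-Lipschitz since flipping the indicator of one hyperedge alters the corresponding projection by at most $\binom{d}{2}$ pairs. Moreover, $T$ is a configuration function of certifiability index $2$: for any realization with $T \geq k$, exhibiting one hyperedge in each of $\mathcal H, \mathcal H'$ per witnessing pair uses only $2k$ coordinates. Talagrand's convex-distance inequality for configuration functions then gives, with $m$ the median of $T$ and some $C = C(d)$,
\[
\mathbb P\!\left(T \geq m + t\right) \leq 2 \exp\!\left(-\tfrac{t^2}{C(m+t)}\right) \quad \text{for all } t \geq 0,
\]
which implies $m = (1+o(1))\mathbb E[T]$ and the mixed Gaussian/exponential tail $\mathbb P(T \geq t) \leq 2e^{-t/(2C)}$ once $t \geq 2m$.

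The proof concludes by splitting the expectation at $T = 2m$. On $\{T \leq 2m\}$ one has $(1+\lambda)^T \leq e^{2\lambda m} = 1 + o(1)$. For $t > 2m$, using the tail estimate in the layer-cake identity $\mathbb E[(1+\lambda)^T] = 1 + \int_0^\infty \ln(1+\lambda)(1+\lambda)^t\,\mathbb P(T > t)\,dt$ yields
\[
\int_{2m}^{\binom{n}{2}} \ln(1+\lambda)(1+\lambda)^t \cdot 2e^{-t/(2C)}\, dt \;\lesssim\; \lambda \, e^{-\Omega(m)} \;=\; o(1),
\]
where the integral is convergent precisely because $\ln(1+\lambda) = o(1) \ll 1/(2C)$. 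Summing the two parts gives $\mathbb E_{\Qb}[L^2] = 1 + o(1)$. I anticipate that the delicate step is the Talagrand step: verifying the correct Lipschitz constant and that $T$ is uniformly $2$-certifiable, so that the exponential tail dominates $(1+\lambda)^t$ over the full range up to the trivial bound $T \leq \binom{n}{2}$.
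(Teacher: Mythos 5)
Your proposal is correct and follows essentially the same route as the paper: reduce $\mathbb E_{\Qb}[L^2]$ to $\mathbb E_{\mathcal H,\mathcal H'}[(1+\lambda)^T]$ with $T=|E(\mathcal P(\mathcal H))\cap E(\mathcal P(\mathcal H'))|$ via the replica trick, establish sub-Gaussian/exponential concentration of $T$ around its median by Talagrand's inequality (using that $T/\binom{d}{2}$ is Lipschitz and $2\binom{d}{2}k$-certifiable), relate the median to the mean $n^{2\delta+o(1)}$, and split the exponential moment at a constant multiple of the median. The only cosmetic differences are that you compute the per-edge factor exactly as $1+\tfrac{(p-q)^2}{q(1-q)}$ where the paper simply upper-bounds it by $1+p^2/q$, and you phrase the final step as a layer-cake integral over $\{T>2m\}$ whereas the paper integrates directly against the tail of $Y$; neither changes the substance.
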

To prove Lemma~\ref{prop-info-detection-lower-bound}, we first bound $\mathbb E_{\Qb}\big[ L(A)^2 \big]$ by the following lemma.
\begin{lemma}{\label{lem-second-moment-relax}}
    We have
    \begin{equation}{\label{eq-second-moment-relax-1}}
        \mathbb E_{\Qb}\Big[L(A)^2\Big]\leq\mathbb E_{\mathcal H,\mathcal H' \sim\mu\otimes\mu}\Bigg[ \exp\Big(n^{-1-\beta+2\alpha+o(1)}\cdot \big| E(\mathcal P(\mathcal H)) \cap E(\mathcal P(\mathcal H')) \big| \Big) \Bigg] \,.
    \end{equation}
\end{lemma}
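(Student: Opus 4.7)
\medskip

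\noindent\textbf{Proof plan for Lemma~\ref{lem-second-moment-relax}.} My plan is to expand the second moment of the likelihood ratio via the replica trick, carry out the edge-by-edge expectation under $\Qb$, and show that only pairs lying in $E(\mathcal P(\mathcal H))\cap E(\mathcal P(\mathcal H'))$ produce a non-trivial contribution.

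First I would write $L(A)=\Pb(A)/\Qb(A)$ by conditioning on the latent hypergraph $\mathcal H$: for each unordered pair $(i,j)$ the conditional law of $A_{ij}$ is $\operatorname{Ber}(p)$ if $(i,j)\in E(\mathcal P(\mathcal H))$ and $\operatorname{Ber}(q)$ otherwise, so
\begin{align*}
    L(A)=\mathbb E_{\mathcal H\sim\mu}\Bigl[\prod_{(i,j)\in E(\mathcal P(\mathcal H))} r(A_{ij})\Bigr],\qquad r(a):=\frac{p^{a}(1-p)^{1-a}}{q^{a}(1-q)^{1-a}}.
\end{align*}
Squaring and introducing an independent replica $\mathcal H'\sim\mu$, Fubini gives
\begin{align*}
    \mathbb E_{\Qb}\bigl[L(A)^2\bigr]=\mathbb E_{\mathcal H,\mathcal H'}\Bigl[\prod_{(i,j)}\mathbb E_{A_{ij}\sim\operatorname{Ber}(q)}\bigl[r(A_{ij})^{\mathbf 1_{(i,j)\in E(\mathcal P(\mathcal H))}+\mathbf 1_{(i,j)\in E(\mathcal P(\mathcal H'))}}\bigr]\Bigr].
\end{align*}

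Next I would evaluate the per-edge factor in each of the three cases for the exponent. Direct computation under $\operatorname{Ber}(q)$ gives $\mathbb E[r(A_{ij})]=q\cdot(p/q)+(1-q)\cdot((1-p)/(1-q))=1$, so every pair in the symmetric difference $E(\mathcal P(\mathcal H))\triangle E(\mathcal P(\mathcal H'))$ contributes exactly $1$, as does every pair outside $E(\mathcal P(\mathcal H))\cup E(\mathcal P(\mathcal H'))$. For pairs in the intersection the factor is
\begin{align*}
    \mathbb E[r(A_{ij})^2]=\frac{p^2}{q}+\frac{(1-p)^2}{1-q}=1+\frac{(p-q)^2}{q(1-q)},
\end{align*}
where the last identity follows from a one-line algebraic simplification. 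Hence
\begin{align*}
    \mathbb E_{\Qb}\bigl[L(A)^2\bigr]=\mathbb E_{\mathcal H,\mathcal H'}\Bigl[\Bigl(1+\tfrac{(p-q)^2}{q(1-q)}\Bigr)^{|E(\mathcal P(\mathcal H))\cap E(\mathcal P(\mathcal H'))|}\Bigr].
\end{align*}

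Finally I would apply the elementary inequality $1+x\le e^{x}$, and estimate the prefactor using $q\le p$, $q(1-q)=n^{-1+\beta+o(1)}$ and $(p-q)^2\le p^2=n^{-2+2\alpha+o(1)}$, yielding $\tfrac{(p-q)^2}{q(1-q)}\le n^{-1-\beta+2\alpha+o(1)}$. Substituting this bound on the base inside the expectation gives precisely \eqref{eq-second-moment-relax-1}. I expect no real obstacle here: the heart of the argument is the cancellation $\mathbb E[r(A_{ij})]=1$, which is exactly the property that collapses the replica expansion onto the intersection of the two projections, and the only ``computation'' is the tidy identity for $\mathbb E[r(A_{ij})^2]-1$; the whole argument is completely independent of the hypergraph structure and only uses that the coordinates of $A$ are conditionally independent given $\mathcal H$.
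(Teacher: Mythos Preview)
Your proposal is correct and follows essentially the same route as the paper: both expand $L(A)$ by conditioning on $\mathcal H$, introduce an independent replica $\mathcal H'$, use $\mathbb E_{\Qb}[\ell(A_{ij})]=1$ to collapse the product onto $E(\mathcal P(\mathcal H))\cap E(\mathcal P(\mathcal H'))$, and then bound the per-edge second moment by $\exp(n^{-1-\beta+2\alpha+o(1)})$. Your computation of the exact identity $\mathbb E_{\Qb}[r(A_{ij})^2]=1+\tfrac{(p-q)^2}{q(1-q)}$ is a slightly cleaner intermediate step than the paper's direct bound, but the two arguments are otherwise identical.
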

\begin{proof}
    Note that  
    \begin{align*}
        L(A) &= \frac{\Pb(A)}{\Qb(A)} = \mathbb E_{\mathcal H \sim\mu}\left[ \frac{\Pb(A\mid\mathcal H)}{\Qb(A)} \right] \\
        &= \mathbb E_{\mathcal H \sim\mu}\left[ \prod_{1 \leq i<j\leq n} \frac{\Pb(A_{i,j}\mid\mathcal H)}{\Qb(A_{i,j})} \right] = \mathbb E_{\mathcal H \sim\mu}\left[ \prod_{(i,j) \in E(\mathcal P(\mathcal H))} \ell(A_{i,j}) \right] \,,
    \end{align*}
    where
    \begin{align*}
        \ell(A_{i,j}) = \mathbf{1}_{\{A_{i,j}=1\}} \cdot \frac{n^{-1+\alpha+o(1)}}{n^{-1+\beta+o(1)}} + \mathbf{1}_{\{A_{i,j}=0\}} \cdot \frac{1-n^{-1+\alpha+o(1)}}{1-n^{-1+\beta+o(1)}} \,.
    \end{align*}
    We then have (note that $\mathbb E_{\Qb}[\ell(A_{i,j})]=1$)
    \begin{align}
        \mathbb E_{\Qb}\Big[L(A)^2\Big] &=\mathbb E_{\mathcal H,\mathcal H' \sim\mu\otimes\mu}\left\{ \mathbb E_{A\sim\Qb}\left[ \frac{\Pb(A\mid\mathcal H)}{\Qb(A)} \cdot \frac{\Pb(A\mid\mathcal H')}{\Qb(A)} \right] \right\} \nonumber \\
        &=\mathbb E_{\mathcal H,\mathcal H' \sim\mu\otimes\mu}\Bigg\{ \mathbb E_{A\sim\Qb}\Big[ \prod_{(i,j) \in E(\mathcal P(\mathcal H))} \ell(A_{i,j}) \cdot \prod_{(i,j) \in E(\mathcal P(\mathcal H'))} \ell(A_{i,j}) \Big] \Bigg\} \nonumber \\
        &= \mathbb E_{\mathcal H,\mathcal H' \sim\mu\otimes\mu}\Bigg\{ \prod_{(i,j) \in E(\mathcal P(\mathcal H)) \cap E(\mathcal P(\mathcal H'))} \mathbb E_{A\sim\Qb}\Big[ \ell(A_{i,j})^2 \Big] \Bigg\} \,.  \label{eq-second-moment-calc-1}
    \end{align}
    Finally, from straightforward calculation we have
    \begin{align*}
        \mathbb E_{A\sim\Qb}\Big[ \ell(A_{i,j})^2 \Big] &=\left( \frac{n^{-1+\alpha+o(1)}}{n^{-1+\beta+o(1)}} \right)^2 \cdot n^{-1+\beta+o(1)} + \left( \frac{1-n^{-1+\alpha+o(1)}}{1-n^{-1+\beta+o(1)}} \right)^2 \cdot (1-n^{-1+\beta+o(1)}) \\
        &\leq n^{-1-\beta+2\alpha+o(1)}+1
        \leq \,\exp\big(n^{-1-\beta+2\alpha+o(1)}\big) \,,
    \end{align*}
    where the second line follows from the assumption $\beta\leq\alpha\leq 1$. Plugging the above estimation into \eqref{eq-second-moment-calc-1}, we get \eqref{eq-second-moment-relax-1}.
\end{proof}

Based on Lemma~\ref{lem-second-moment-relax}, it suffices to bound the tail probability of $|E(\mathcal P(\mathcal H)) \cap E(\mathcal P(\mathcal H'))|$, as shown in the following lemma. 
\begin{lemma}{\label{lem-est-tail-intersection}}
    We have
    \begin{equation}{\label{eq-tail-est-intersection}}
        \mathbb P_{\mathcal H,\mathcal H' \sim\mu\otimes\mu}\Big( \big| E(\mathcal P(\mathcal H)) \cap E(\mathcal P(\mathcal H')) \big| \geq n^{2\delta+o(1)} + t n^{\delta+o(1)} \Big) \leq \exp\big(-\Omega(1)\cdot t^2\big) \,.
    \end{equation}
\end{lemma}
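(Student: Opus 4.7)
The plan is to apply Talagrand's concentration inequality for certifiable Lipschitz functions to
$f(\mathcal H, \mathcal H') := \big| E(\mathcal P(\mathcal H)) \cap E(\mathcal P(\mathcal H')) \big|$,
viewed as a function of the $2|\operatorname{U}|$ independent Bernoulli indicators $\{ \mathbf{1}_{\Psi \in \mathcal H}, \mathbf{1}_{\Psi \in \mathcal H'} \}_{\Psi \in \operatorname{U}}$. The strategy is three-fold: (i) compute the mean, (ii) verify the two Talagrand hypotheses (Lipschitz constant and certifiability), and (iii) translate concentration around the median into concentration around the reference value $n^{2\delta+o(1)}$.

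First, I would compute $\mathbb E[f]$. For any pair $(i,j)$, since $s \binom{n-2}{d-2} = n^{-1+\delta+o(1)} = o(1)$, first-order expansion gives
\begin{align*}
    \mathbb P\big( (i,j) \in E(\mathcal P(\mathcal H)) \big) = 1 - (1-s)^{\binom{n-2}{d-2}} = n^{-1+\delta+o(1)}.
\end{align*}
By independence of $\mathcal H$ and $\mathcal H'$, summing over the $\binom{n}{2}$ pairs yields
\begin{align*}
    \mathbb E[f] = \binom{n}{2} \cdot \Big( n^{-1+\delta+o(1)} \Big)^2 = n^{2\delta+o(1)}.
\end{align*}

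Next, I would verify the conditions for Talagrand's inequality. The function $f$ is $c$-Lipschitz with $c = \binom{d}{2}$, since flipping the indicator of a single hyperedge $\Psi$ modifies at most $\binom{d}{2}$ pair-edges of the corresponding projection, each of which can change the intersection by at most one. Moreover, $f$ is $2$-certifiable: whenever $f \geq s$, each of the $s$ witnessing intersection edges $(i,j)$ can be certified by selecting one hyperedge $\Psi \in \mathcal H$ and one hyperedge $\Psi' \in \mathcal H'$ each containing $\{i,j\}$; fixing these $2s$ indicators to $1$ guarantees $f \geq s$. Talagrand's inequality then delivers, for some constant $C_d$ depending only on $d$,
\begin{align*}
    \mathbb P\big( |f - \mathrm{Med}(f)| \geq t \big) \leq 4 \exp\Big( - \frac{t^2}{C_d \cdot \mathrm{Med}(f)} \Big).
\end{align*}

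Finally, integrating this tail bound shows that $|\mathbb E[f] - \mathrm{Med}(f)| = O\big(\sqrt{\mathrm{Med}(f)}\big) = n^{\delta+o(1)}$, so $\mathrm{Med}(f) = n^{2\delta+o(1)}$. Substituting the threshold $n^{2\delta+o(1)} + t \cdot n^{\delta+o(1)}$ and absorbing the (at most $n^{\delta+o(1)}$) median--reference discrepancy into the additive $n^{\delta+o(1)}$ term then produces the desired $\exp(-\Omega(t^2))$ tail bound. The main subtlety is the $2$-certifiability (rather than the more familiar $1$-certifiability that appears for single-hyperedge statistics), which is forced by the fact that each edge in the intersection requires witnesses from both $\mathcal H$ and $\mathcal H'$; once this is in place, the Gaussian tail at the correct scale $\sqrt{\mathbb E[f]} = n^{\delta+o(1)}$ follows immediately, with the constant $C_d$ harmlessly absorbed since $d$ is fixed.
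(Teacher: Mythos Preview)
Your proposal is correct and follows essentially the same route as the paper. The only cosmetic difference is that the paper normalizes by $\binom{d}{2}$, defining $X=|E(\mathcal P(\mathcal H))\cap E(\mathcal P(\mathcal H'))|/\binom{d}{2}$ so that $X$ is $1$-Lipschitz and $f$-certifiable with $f(s)=2\binom{d}{2}s$, whereas you keep the unnormalized count with Lipschitz constant $\binom{d}{2}$ and $2$-certifiability; the two formulations are equivalent and produce the same $d$-dependent constant in the exponent.
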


To prove Lemma~\ref{lem-est-tail-intersection}, we begin by recalling the relevant definitions and notation for Talagrand's concentration inequality.
\begin{defn}
    Let $\Omega=\prod_{i=1}^{N}\Omega_i$ be a product space. For a function $h:\Omega\to\mathbb R$, we call $h$ to be Lipschitz, if $|h(x)-h(y)|\leq 1$ whenever $x,y$ differ in at most one coordinate. Furthermore, for an increasing function $f:\mathbb R_{\geq 0}\to\mathbb R_{\geq 0}$, we say that $h$ is $f$-certifiable if, whenever $h(x)\geq s$, there exists $I\subset\{1,\ldots,N\}$ with $|I|\leq f(s)$ such that all $y\in\Omega$ that agree with $x$ on the coordinates in $I$ satisfy $h(y)\geq s$.
\end{defn}
A special case of Talagrand’s concentration inequality can be stated as follows.
\begin{thm}{\label{thm-Talagrand}}
    Let $\Pb$ be any product measure on a sample space $\Omega$, and let $h$ be a Lipschitz function that is $f$-certifiable. Writing $X=h(\omega)\,(\omega\in\Omega)$, for any $b,t>0$ we have:
    \begin{align*}
        \Pb\big(X\leq b-t\sqrt{f(b)}\big)\cdot\Pb\big(X\geq b\big)\leq e^{-t^2/4}  \,.
    \end{align*}
\end{thm}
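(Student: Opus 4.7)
The plan is to deduce the stated inequality from Talagrand's underlying \emph{convex distance inequality}. For a measurable subset $A \subset \Omega$ of the product space, define
\begin{equation*}
    d_T(y, A) \;=\; \sup_{\alpha \in \R^N,\, \|\alpha\|_2 \le 1}\ \inf_{x \in A}\ \sum_{i:\, x_i \ne y_i} \alpha_i\,.
\end{equation*}
The foundational estimate that powers the whole argument is
\begin{equation*}
    \int_\Omega \exp\!\big(d_T(y, A)^2 / 4\big)\, d\Pb(y) \;\le\; \frac{1}{\Pb(A)}\,.
\end{equation*}

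The first step is to establish this foundational inequality by induction on the number of coordinates $N$. The base case $N=1$ is a direct two-point computation. For the inductive step, condition on the last coordinate $\omega$, and let $A_\omega = \{x' : (x',\omega) \in A\}$ denote the fiber while $B = \bigcup_\omega A_\omega$ is its projection onto the first $N-1$ coordinates. The key pointwise bound, obtained by splitting the supremum defining $d_T$ into its last coordinate and its first $N-1$ coordinates, is
\begin{equation*}
    d_T\big((y',\omega), A\big)^2 \;\le\; \min_{s \in [0,1]}\Big\{(1-s)^2 \,+\, s\, d_T(y', A_\omega)^2 \,+\, (1-s)\, d_T(y', B)^2\Big\}\,.
\end{equation*}
Combining this with H\"older's inequality and the inductive hypothesis on the $(N-1)$-dimensional slices, and then optimizing over $s$, yields the claimed exponential integrability.

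The second step is to convert the hypotheses on $h$ into the lower bound $d_T(y, A) \ge t$ on the event where the theorem claims concentration. Set $A = \{x \in \Omega : h(x) \ge b\}$ and fix $y$ with $h(y) \le b - t\sqrt{f(b)}$. For each $x \in A$, $f$-certifiability supplies an index set $I_x \subset \{1,\ldots,N\}$ with $|I_x| \le f(b)$ such that every point agreeing with $x$ on $I_x$ still lies in $A$. Overwriting the coordinates of $y$ on $I_x$ by those of $x$ produces a point $z \in A$ that differs from $y$ in exactly $|\{i \in I_x : x_i \ne y_i\}|$ coordinates, so the Lipschitz hypothesis forces
\begin{equation*}
    |\{i \in I_x : x_i \ne y_i\}| \;\ge\; h(z) - h(y) \;\ge\; t\sqrt{f(b)}\,.
\end{equation*}
Passing to the equivalent convex-hull representation $d_T(y,A) = \inf\{\|v\|_2 : v \in \operatorname{conv}\{(\mathbf{1}_{x_i \ne y_i})_{i=1}^N : x \in A\}\}$ via the minimax theorem, it suffices to verify that every convex combination $v = \sum_x \lambda_x (\mathbf{1}_{x_i \ne y_i})_i$ has $\|v\|_2 \ge t$; pairing with the unit vector $\alpha^{(x)} = \mathbf{1}_{I_x}/\sqrt{|I_x|}$ and summing the previous coordinate-count bound weighted by $\lambda_x$ yields exactly this.

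Combining the two steps and restricting the integral in the convex distance inequality to the event $\{h \le b - t\sqrt{f(b)}\} \subset \{d_T(\,\cdot\,, A) \ge t\}$ gives
\begin{equation*}
    \frac{1}{\Pb(h \ge b)} \;\ge\; \int e^{d_T(y,A)^2/4}\, d\Pb(y) \;\ge\; e^{t^2/4}\cdot \Pb\!\big(h \le b - t\sqrt{f(b)}\big)\,,
\end{equation*}
which rearranges to the stated bound. The main obstacle is the inductive proof of the convex distance inequality itself: the two-point interpolation estimate and the choice of the parameter $s$ must be engineered so that H\"older's inequality preserves the exponent $1/4$; any sub-optimal parameterization loses the sharp Gaussian constant. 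The subsequent translation from $f$-certifiability to $d_T \ge t$ is largely routine once the convex-hull formulation of $d_T$ is in hand.
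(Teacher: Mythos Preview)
The paper does not prove this theorem at all; it simply cites \cite[Section~7.7]{AS16}. Your proposal supplies exactly the standard textbook argument referenced there --- Talagrand's convex distance inequality, proved by induction on the number of coordinates, followed by a translation of the Lipschitz and $f$-certifiable hypotheses into a lower bound on $d_T$. So your overall route coincides with the one the paper invokes.

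There is, however, a genuine error in your second step. You set $A=\{h\ge b\}$ and, for $y$ with $h(y)\le b-t\sqrt{f(b)}$, attempt to show $d_T(y,A)\ge t$ by producing for each $x\in A$ a unit vector $\alpha^{(x)}=\mathbf 1_{I_x}/\sqrt{|I_x|}$ with $\langle\alpha^{(x)},u_x\rangle\ge t$, and then ``summing weighted by $\lambda_x$''. But $\sum_x\lambda_x\langle\alpha^{(x)},u_x\rangle$ does \emph{not} lower-bound $\big\|\sum_x\lambda_x u_x\big\|_2$, because the test vector varies with $x$. In fact the containment $\{h\le b-t\sqrt{f(b)}\}\subset\{d_T(\,\cdot\,,A)\ge t\}$ you are aiming for is false in general: on $\{0,1\}^N$ take $h(z)=b$ for $z\ne 0$ and $h(0)=b-1$. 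This $h$ is Lipschitz and $f$-certifiable with $f\equiv 1$, yet $d_T\big(0,\{h\ge b\}\big)=d_T\big(0,\Omega\setminus\{0\}\big)=1/\sqrt{N}$, which is $<t$ for any fixed $t\in(0,1]$ once $N$ is large.

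The fix is to reverse the roles of the two events. Set $B=\{h\le b-t\sqrt{f(b)}\}$ and show that every $x$ with $h(x)\ge b$ satisfies $d_T(x,B)\ge t$. For such $x$ the certifying set $I_x$ depends only on the \emph{fixed} point $x$, and the single vector $\alpha=\mathbf 1_{I_x}/\sqrt{f(b)}$ (with $\|\alpha\|_2\le 1$) gives, for \emph{every} $y\in B$,
\[
\langle\alpha,(\mathbf 1_{x_i\ne y_i})_i\rangle=\frac{|\{i\in I_x:x_i\ne y_i\}|}{\sqrt{f(b)}}\ge\frac{t\sqrt{f(b)}}{\sqrt{f(b)}}=t\,,
\]
by the Lipschitz/certifiability count you already derived. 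Hence $\{h\ge b\}\subset\{d_T(\,\cdot\,,B)\ge t\}$, and the convex distance inequality applied to $B$ gives
\[
e^{t^2/4}\,\Pb(h\ge b)\ \le\ \int e^{d_T(\cdot,B)^2/4}\,d\Pb\ \le\ \frac{1}{\Pb(B)}\,,
\]
which is the desired bound. Your induction for the convex distance inequality and your Lipschitz/certifiability coordinate count are fine; only the direction in which you apply them needs to be swapped.
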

The proof of this inequality can be found in, e.g., \cite[Section 7.7]{AS16}. We now proceed to the proof of Lemma~\ref{lem-est-tail-intersection}. 
\begin{proof}[Proof of Lemma~\ref{lem-est-tail-intersection}]
    For our purpose, we consider $\Omega=\{0,1\}^{2\binom{n}{d}}$ and $\Pb$ be the product measure of $2\binom{n}{d}$ Bernoulli variables with parameter $s=n^{-(d-1)+\delta+o(1)}$. Let
    \begin{align*}
        X = h(\mathcal H,\mathcal H')=\big|E(\mathcal P(\mathcal H)) \cap E(\mathcal P(\mathcal H'))\big|/\tbinom{d}{2}, 
    \end{align*}
    where we view $(\mathcal H,\mathcal H')$ as a vector in $\{0,1\}^{2\binom{n}{d}}$. It is straightforward to verify that $h$ is Lipschitz. Furthermore, we claim that the function $h$ defined above is $f$-certifiable, where $f(s)=2\tbinom{d}{2}s \leq d^2 s$. 
    In fact, if $h(\mathcal H,\mathcal H')\geq s$ for some $(\mathcal H,\mathcal H')\in\Omega$, then there exists $E=\{(i,j):1\leq i<j\leq n\}$ such that for all $e\in E$ there exist $\zeta_e,\zeta_e' \in \tbinom{[n]}{d}$ with $\mathcal H(\zeta_e)=\mathcal H'(\zeta'_e)=1$. Denote $I=\{ \zeta_e:e\in E \} \cap \{ \zeta'_e: e \in E \}$. Then for all $(\widetilde{\mathcal H},\widetilde{\mathcal H}')\in \{0,1\}^{2\binom{n}{d}}$ that agrees with $(\mathcal H,\mathcal H')$ on $I$, we have $e \in E(\mathcal P(\widetilde{\mathcal H})) \cap E(\mathcal P(\widetilde{\mathcal H}'))$ and thus $h(\widetilde{\mathcal H},\widetilde{\mathcal H}') \geq s$. As a result, Theorem~\ref{thm-Talagrand} yields that for any $b,t\geq 0$,
    \begin{align*}
        \Pb\big(X\leq b-td\sqrt{b}\big)\cdot\Pb(X\geq b)\leq e^{-t^2/4} \,.
    \end{align*}
    Taking $b=\operatorname{Median}(X)$ to be the median of $X$, we then get that
    \begin{equation}{\label{eq-tail-intersection-variable-median-1}}
    \begin{aligned}
        \Pb\Big( X \leq \operatorname{Median}(X)-t \Big) \leq 2e^{-t^2/4d^2\operatorname{Median}(X)} \,.\notag
    \end{aligned}
    \end{equation}
    In addition, taking $b=\operatorname{Median}(X)+r$ and $t=r/d\sqrt{b}$ we get that
    \begin{equation}{\label{eq-tail-intersection-variable-median-2}}
    \begin{aligned}
        \Pb\Big( X \geq \operatorname{Median}(X)+r \Big) \leq 2e^{-r^2/4d^2(\operatorname{Median}(X)+r)} \leq 2e^{-r/8d^2} + 2e^{-r^2/8d^2\operatorname{Median}(X)} \,.\notag
    \end{aligned}
    \end{equation}
    Thus we have
    \begin{align}
        \Big|\mathbb E[X]-\operatorname{Median}(X)\Big| &\leq\mathbb E\Big[\big|X-\operatorname{Median}(X)\big|\Big] \nonumber \\
        &=\int_{0}^{\infty} \Pb\Big(\big|X-\operatorname{Median}(X)\big|\geq t\Big) \mathrm{d}t \nonumber \\
        &\leq \int_{0}^{\infty} 4\exp\Big( -\tfrac{t^2}{8d^2\operatorname{Median}(X)} \Big) +2\exp\big(-\tfrac{t}{8d^2}\big) \mathrm{d}t \nonumber \\
        &= 4\sqrt{2\pi}d\sqrt{\operatorname{Median}(X)}+16d^2. \label{eq-est-median-minus-expectation}
    \end{align}
    By the independence of $\mathcal H$ and $\mathcal H'$, we can easily get
    \begin{equation*}
        \E[X]=\tbinom{n}{2}\big(\tbinom{n-2}{d-2}s\big)^2/\tbinom{d}{2}=n^{2\delta+o(1)} \,.
    \end{equation*}
    Thus, we have (also note that $\mathbb E[X]=n^{2\delta+o(1)}$ and \eqref{eq-est-median-minus-expectation} imply that $\operatorname{Median}(X)=\mathbb E[X]+\Omega(1)\cdot\sqrt{\mathbb E[X]}$)
    \begin{equation}{\label{eq-tail-intersection-variable-mean}}
        \Pb\Big(X\geq\mathbb E[X]+t\sqrt{\mathbb E[X]} \Big) \leq 2e^{-\Omega(1)\cdot t^2} \,.
    \end{equation}
    Plugging $\mathbb E[X] = n^{2\delta+o(1)}$ into \eqref{eq-tail-intersection-variable-mean} yields the desired result. 
\end{proof}

Now we can complete the proof of Lemma~\ref{prop-info-detection-lower-bound}. 
\begin{proof}[Proof of Lemma~\ref{prop-info-detection-lower-bound}] 
    Let $Y(\mathcal{H}, \mathcal{H}')=\big|E(\mathcal P(\mathcal H)) \cap E(\mathcal P(\mathcal H'))\big|$ and recall that $1+\beta>2(\delta+\alpha)$. 
    Applying Lemma \ref{lem-est-tail-intersection}, we obtain the following estimate
    \begin{align*}
        &\quad\ \E_{\mathcal{H},\mathcal{H}'\sim \mu\otimes\mu}\Big[\exp\big(n^{-1-\beta+2\alpha+o(1)}\cdot Y\big)\cdot \11\big{\{Y\geq 2 n^{2\delta+o(1)}\big\}}\Big]\\
        &=\int_{2n^{2\delta+o(1)}}^{\infty}n^{-1-\beta+2\alpha+o(1)}\exp\big(n^{-1-\beta+2\alpha+o(1)}\cdot y\big)\cdot 
        \P_{\mathcal{H},\mathcal{H}'\sim \mu\otimes\mu}(Y\geq y)\mathrm{d}y\\
        \overset{\text{Lemma}~\ref{lem-est-tail-intersection}}&{\leq}
        \int_{n^{\delta+o(1)}}^{\infty}\exp\Big(n^{-1-\beta+2\alpha+o(1)}\big(n^{2\delta+o(1)}+tn^{\delta+o(1)}\big)\Big)\exp(-ct^2)\mathrm{d}t\\
        &=o\Big(\exp\big(n^{-1-\beta+2(\alpha+\delta)+o(1)}\big)\Big)=o(1). 
    \end{align*}
    Then, we have (recall that $1+\beta=2(\alpha+\delta)+\Omega(1)$)
    \begin{equation*}
        \E_{\mathcal{H},\mathcal{H}'\sim \mu\otimes\mu}\Big[\exp\big(n^{-1-\beta+2\alpha+o(1)}\cdot Y\big)\Big] \leq
        \exp\big(2\cdot n^{-1-\beta+2(\alpha+\delta)+o(1)}\big)+o(1)=1+o(1). 
    \end{equation*}
    Combining this bound with Lemma~\ref{lem-second-moment-relax} gives \eqref{eq-goal-detection-lower-bound}. 
\end{proof}

\section{Proof of Proposition~\ref{prop-succeess-widehat-H}}{\label{sec:proof-prop-1.7}}

Firstly, note that for all $\Psi\subset [n]$ with $|\Psi|=d$ we have
\begin{align*}
    &\Pb(\Psi\in\widehat{\mathcal H}\cap\mathcal H)=p^{\binom{d}{2}} \cdot s=\Theta(1)\cdot s=\Theta(1) \cdot \Pb(\Psi\in\mathcal H) \mbox{ when } p=\Omega(1) \,, \\
    &\Pb(\Psi\in\widehat{\mathcal H}\cap\mathcal H)=p^{\binom{d}{2}} \cdot s=(1-o(1))\cdot s=(1-o(1))\cdot\Pb(\Psi\in\mathcal H) \mbox{ when } p=1-o(1) \,.
\end{align*}
This implies that 
\begin{align*}
    &\mathbb E\left[|\mathcal{H}\setminus\widehat{\mathcal{H}}|\right] = (1-\Omega(1))\cdot s\tbinom{n}{d} \text{ when }p=\Omega(1)\,, \\
    &\mathbb E\left[|\mathcal{H}\setminus\widehat{\mathcal{H}}|\right] = o(1)\cdot s\tbinom{n}{d} \text{ when }p=1-o(1)\,.
\end{align*}
Thus, it suffices to show that when $\delta<\tfrac{d-1}{d+1}$ and $\beta<\tfrac{2\delta}{d(d-1)}+\tfrac{d-2}{d}$, for all $0\leq p\leq 1$ we have
\begin{align*}
    \Pb(\Psi\in\widehat{\mathcal H}\mid \Psi\not\in\mathcal H)=o(1)\cdot s \,.
\end{align*}
We follow the approach in \cite{BGP24}. Fix any $\Psi\subset[n]$ with $|\Psi|=d$, denote 
\begin{align*}
    \mathcal U=\mathcal U(\mathcal H)=\Big\{ A \subset \Psi: |A| \geq 2, \exists \Phi \in \mathcal H, A = \Phi \cap \Psi \Big\} \,.
\end{align*}
Then we have
\begin{align*}
    \Pb\big(\Psi\in\widehat{\mathcal H}\mid \Psi\not\in\mathcal H\big)= \sum_{\mathcal U:\Psi\not\in\mathcal U} \Pb\big(\Psi\in\widehat{\mathcal H};\mathcal U\big)= \sum_{\mathcal U:\Psi\not\in\mathcal U} \Pb(\mathcal U) \Pb\big(\Psi\in\widehat{\mathcal H}\mid\mathcal U\big) \,.
\end{align*}
Denote $\mathcal U=\{ A_1,\ldots,A_k \}$. We then get from a simple union bound that
\begin{align*}
    \Pb(\mathcal U)=\prod_{1 \leq \ell \leq k} sn^{d-|A_\ell|}=\prod_{1 \leq \ell \leq k} n^{1+\delta-|A_\ell|+o(1)} \,.
\end{align*}
Denoting $(x)_+\triangleq x\mathbf{1}_{x\geq 0}$ the positive part of $x$, we have
\begin{align*}
    \Pb\big( \Psi\in\widehat{\mathcal H}\mid\mathcal U \big)&=\Pb\big( (i,j)=1, \forall (i,j)\in E(\Psi)\setminus E(A_\ell) \big)  \\
    &\leq q^{ (\binom{d}{2} - \sum_{1 \leq \ell \leq k} \binom{|A_\ell|}{2})_+ }=n^{ (-1+\beta)(\binom{d}{2} - \sum_{1 \leq \ell \leq k} \binom{|A_\ell|}{2})_+ + o(1) } \,.
\end{align*}
Thus, we get that
\begin{align*}
    \Pb\big(\Psi\in\widehat{\mathcal H}\mid \Psi\not\in\mathcal H\big) \leq \sum_{\substack{(A_1,\ldots,A_k)\\A_\ell\subset\Psi,\Psi\neq A_\ell}} n^{-\omega(A_1,\ldots,A_k)+o(1)} \,,
\end{align*}
where 
\begin{align*}
    \omega(A_1,\ldots,A_k)&=\sum_{1 \leq \ell \leq k}\big( |A_\ell|-1-\delta \big)+(1-\beta)\Big( \tbinom{d}{2} - \sum_{1 \leq i \leq k} \tbinom{|A_\ell|}{2} \Big)_+ \\
    &> d-1-\delta+\sum_{1 \leq \ell \leq k}\big( |A_\ell|-1-\delta \big)- \big(\tfrac{2}{d}-\tfrac{2\delta}{d(d-1)}\big) \sum_{1 \leq \ell \leq k} \tbinom{|A_i|}{2} \,,
\end{align*}
where the inequality follows from $\beta<\tfrac{d-2}{d}+\tfrac{2\delta}{d(d-1)}$. Note that $2 \leq |A_\ell| \leq d-1$ and $\delta<\tfrac{d-1}{d+1}$, we get that 
\begin{align*}
    &\sum_{1 \leq \ell \leq k}\big( |A_\ell|-1-\delta \big)- \big(\tfrac{2}{d}-\tfrac{2\delta}{d(d-1)}\big) \sum_{1 \leq \ell \leq k} \tbinom{|A_\ell|}{2} \\
    =\ &-\tfrac{1}{d} \sum_{1 \leq \ell \leq k} \big(|A_\ell|-d\big) \big((1-\tfrac{\delta}{d-1})|A_\ell|-(1+\delta)\big) \geq 0 \,,
\end{align*}
In conclusion, we obtain that $\omega(A_1,\ldots,A_k)>d-1-\delta$ for all $(A_1,\ldots,A_k)$. Since the possible choice of $(A_1,\ldots,A_k)$ is bounded by a constant depending only on $d$, we get that
\begin{align*}
    \Pb\big(\Psi\in\widehat{\mathcal H}\mid \Psi\not\in\mathcal H\big) \leq O(1) \cdot n^{-(d-1)+\delta-\Omega(1)} = o(1) \cdot s \,,
\end{align*}
as desired.

\section{Proof of Proposition~\ref{prop-information-recovery-lower-bound}}\label{sec:proof-prop-1.8}

We first prove Item~(1). Note that if there is an estimator $\widehat{\mathcal H}$ such that $\mathbb E_{\Pb}[ |\mathcal H \triangle \widehat{\mathcal H}| ]=(1-\Omega(1))s\tbinom{n}{d}$. Then clearly we have
\begin{align*}
    \Pb\Big( |\widehat{\mathcal H} \cap \mathcal H| \geq \Omega(1) s\tbinom{n}{d}, |\widehat{\mathcal H}| \leq (2-\Omega(1))s\tbinom{n}{d} \Big)\geq \Omega(1) \,.
\end{align*}
In addition, note that when $|\widehat{\mathcal H}|\geq s\tbinom{n}{d}$, we can always consider a random subset of $\widehat{\mathcal H}$ with cardinality $s\tbinom{n}{d}$; and when $|\widehat{\mathcal H}|<s \tbinom{n}{d}$ we can always add arbitrary elements into $\widehat{\mathcal H}$; this means that we can generate an estimator $\widehat{\mathcal H}_{\bullet}$ such that $|\widehat{\mathcal H}_{\bullet}|=(1+o(1))s\tbinom{n}{d}$ a.s. and $\mathbb E_{\Pb}[ |\widehat{\mathcal H}_{\bullet} \cap \mathcal H| ]=\Omega(1) s\tbinom{n}{d}$. We will show that this is impossible. Define $\mu'=\mu\big(\cdot\mid |\mathcal H|\leq(1+o(1)) s\tbinom{n}{d}\big)$. Conditioned on $\mathcal{H}\sim\mu'$, we define $\Pb'$ by letting $\Pb'(\cdot\mid\mathcal H)\triangleq\Pb(\cdot\mid\mathcal H)$. It is straightforward to check that $\operatorname{TV}(\Pb',\Pb)=o(1)$. Thus, by data processing inequality, it suffices to show that when $p=o(1),q=0$, for any estimator $\widehat{\mathcal H}=\widehat{\mathcal H}(A)$ we have 
\begin{align*}
    \Pb'\Big( |\widehat{\mathcal H}(A) \cap \mathcal H| \geq \delta\cdot s\tbinom{n}{d} \Big)=o(1) \mbox{ for any constant } \delta>0 \,.
\end{align*}
Our proof relies on a generalization of Fano's inequality which appears in
e.g. \cite{SW12}. The formulation we use here comes from \cite[Lemma~20]{BHM12} (see also \cite[Theorem~3.1]{HM23}). This generalization departs from the classical Fano’s inequality by adopting a broader notion of what constitutes a successful learning algorithm.
\begin{lemma}[Lemma~20 in \cite{BHM12}]
    Let $\mathcal W$ be a hypothesis class of cardinality $M$ with $Z$ being a hypothesis drawn uniformly at random from $\mathcal W$. Let $X$ and $\widehat{Z} \in \mathcal W$ be random variables defined in such a way that $Z \longrightarrow X \longrightarrow \widehat{Z}$ constitutes a Markov chain. We define $d:\mathcal W \times \mathcal W \to \mathbb R_{\geq 0}$ to be a distance function and, for a given $d>0$, we say that a learner is successful if $d(Z,\widehat{Z})\leq d$. For any $h \in \mathcal W$, we further define $B_d(h)=\{ h' \in \mathcal W: d(h,h') \leq d \}$ and we let and we let $M_d=\max_{h \in \mathcal W}\{ |B_d(h)| \}$. Then we have
    \begin{align*}
        \Pb\big(d(Z,\widehat{Z})\geq d\big) \geq 1-\tfrac{I(Z;X)+1}{\log(M/M_d)}
    \end{align*}
    where $I(Z;X)$ is the mutual information between $Z$ and $X$.
\end{lemma}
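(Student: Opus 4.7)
The plan is to mimic the classical proof of Fano's inequality, but replace the $0/1$ error (``$\widehat Z=Z$'') with the more flexible event $\{d(Z,\widehat Z)\le d\}$, so that the conditional entropy of $Z$ given $\widehat Z$ on the success event is controlled by $\log M_d$ rather than by $0$. The data-processing inequality takes care of passing from $\widehat Z$ back to $X$.

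Concretely, introduce the binary error indicator $E = \mathbf 1\{d(Z,\widehat Z)\ge d\}$ and let $P_e = \Pb(E=1)$. First I would write
\[
H(Z\mid\widehat Z) \;\le\; H(Z,E\mid\widehat Z) \;=\; H(E\mid\widehat Z) + H(Z\mid\widehat Z,E)\,.
\]
Since $E$ is binary, $H(E\mid\widehat Z)\le H(E)\le 1$. For the second term, split on the value of $E$: on $\{E=0\}$ we have $Z\in B_d(\widehat Z)$, a set of cardinality at most $M_d$, hence $H(Z\mid\widehat Z, E=0)\le\log M_d$; on $\{E=1\}$ the trivial bound $H(Z\mid\widehat Z, E=1)\le\log M$ suffices. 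Averaging gives
\[
H(Z\mid\widehat Z) \;\le\; 1 + (1-P_e)\log M_d + P_e\log M \;=\; 1 + \log M_d + P_e\log(M/M_d)\,.
\]

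Next I would use that $Z$ is uniform on $\mathcal W$, so $H(Z)=\log M$, and hence $H(Z\mid\widehat Z) = \log M - I(Z;\widehat Z)$. The Markov chain $Z\to X\to\widehat Z$ together with the data-processing inequality gives $I(Z;\widehat Z)\le I(Z;X)$, so
\[
\log M - I(Z;X) \;\le\; H(Z\mid\widehat Z) \;\le\; 1 + \log M_d + P_e\log(M/M_d)\,.
\]
Rearranging yields $\log(M/M_d) - 1 - I(Z;X) \le P_e\log(M/M_d)$, and dividing by $\log(M/M_d)$ produces the claimed bound $P_e\ge 1 - \tfrac{I(Z;X)+1}{\log(M/M_d)}$.

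There is essentially no hard step here: this is a one-page, purely information-theoretic manipulation once the right error indicator is chosen. The only subtlety worth flagging is the need to be explicit that $Z$ is uniform (so that $H(Z)=\log M$), and to invoke data processing for the chain $Z\to X\to\widehat Z$ rather than writing the bound directly in terms of $\widehat Z$; both are already part of the hypothesis of the lemma, so no additional work is required.
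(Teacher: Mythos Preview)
Your argument is correct and is the standard proof of this generalized Fano inequality. Note, however, that the paper does not actually prove this lemma at all: it is quoted verbatim as Lemma~20 of \cite{BHM12} (with a pointer also to \cite{SW12} and \cite{HM23}) and used as a black box in the proof of Item~(1) of Proposition~\ref{prop-information-recovery-lower-bound}. So there is no ``paper's own proof'' to compare against; your write-up simply supplies the missing details, and they are exactly the ones one would expect---the error indicator $E$, the chain rule for conditional entropy, the cardinality bound $H(Z\mid\widehat Z,E=0)\le\log M_d$, and the data-processing inequality for the Markov chain $Z\to X\to\widehat Z$.
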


For our purpose, for any $K=(1+o(1))s\tbinom{n}{d}$ we let $\mathcal W=\{ \mathcal H \subset \tbinom{[n]}{d}: |\mathcal H|=K \}$. In addition, since adding or deleting an arbitrary $o(K)$ elements of $\widehat{\mathcal H}$ does not affect the result, we can assume that $\widehat{\mathcal H}$ has cardinality $K$. Noting that $\mathcal H \longrightarrow A \longrightarrow \widehat{\mathcal H}$ is a Markov chain and one can define a distance metric over $\mathcal W \times \mathcal W$ by taking $d(\mathcal H,\mathcal H')=\tfrac{|\mathcal H \triangle \mathcal H'|}{2s\binom{n}{d}}$. For any constant $0\leq\epsilon<1$, we then have
\begin{align}{\label{eq-bound-recovery-prob}}
    \Pb\big(d(\mathcal H,\widehat{\mathcal H})\leq \epsilon\mid|\mathcal H|=K \big) \leq \tfrac{I(\mathcal H;A)+1}{\log(M/M_\epsilon)} \,,
\end{align}
where $M=|\mathcal W|$ and $M_{\epsilon}=|\{ \mathcal H' \in \mathcal W: |\mathcal H' \cap \mathcal H|\geq (1-\epsilon)s\binom{n}{d} \}|$. Clearly we have (recall that $K=n^{1+\delta+o(1)}$)
\begin{align*}
    &M=\binom{\tbinom{n}{d}}{K}=\exp\big((d-1-\delta+o(1))K\log n\big) \,, \\
    &M_{\epsilon} \leq \binom{\tbinom{n}{d}}{\epsilon K} \cdot \binom{K}{(1-\epsilon)K} \leq \exp\big((d-1-\delta+o(1))\epsilon K\log n\big) \,.
\end{align*}
We then have
\begin{equation}{\label{eq-bound-M/M_eps}}
    \log(M/M_{\epsilon}) \geq (1-\epsilon)(d-1-\delta-o(1))K\log n \,.
\end{equation}
In addition, since $p=o(1)$ and $q=0$, we then have 
\begin{equation}{\label{eq-bound-I(H;A)}}
    I(\mathcal H;A)=H(A)-H(A \mid \mathcal H) \leq H(A) \leq \log\Big( \binom{\binom{n}{2}}{pK\binom{d}{2}} \Big) \leq (1-\delta)pK\log n \,.
\end{equation}
Plugging \eqref{eq-bound-M/M_eps} and \eqref{eq-bound-I(H;A)} into \eqref{eq-bound-recovery-prob}, we get that
\begin{align*}
    \Pb\big(d(\mathcal H,\widehat{\mathcal H})\leq\epsilon\mid|\mathcal H|=K \big) = o(1) \mbox{ for all } K=(1+o(1))s\tbinom{n}{d} \,.
\end{align*}
This verifies Item~(1) of Proposition~\ref{prop-information-recovery-lower-bound}.

To prove Item~(2), without loss of generality, we assume \(p=1\). A crucial observation (as shown in \cite[Definition~1.6 and Lemma~2.5]{BGPY25+}) is that we only need to prove the following result: suppose we sample \((\mathcal H,A)\) according to \(\Pb\) and we sample \(\mathcal H'\) from the posterior distribution of \(\mathcal H\) given \(A\), i.e. from
\[
\mu_A(\mathcal H'):=\mathbb P \big( \mathcal H' \mid A \big).
\]
Then it remains for us to prove that for all fixed \(\iota=\Omega(1)\) we have 
\begin{equation}\label{eq-goal-partial-recovery-lower-bound}
    \Pb\Big(|\mathcal H \cap \mathcal H'|\geq\iota s\tbinom{n}{d}\Big)=o(1) \,.
\end{equation} 
Firstly, for the posterior we have
\begin{align}
	\mu_A(\mathcal H')
	&=\mathbb P\big(\mathcal H' \mid A \big) \propto \mathbb P(\mathcal H')\mathbb P(A\mid \mathcal H') \nonumber\\
	&\propto \frac{\mathbb P(A\mid \mathcal H')}{\mathbb Q (A)}\mathbb P(\mathcal H') =\frac{1}{L(A)}\frac{\mathbb P(A\mid \mathcal H')}{\mathbb Q (A)}\mathbb P(\mathcal H')\label{eq:posterior-representation}
\end{align}
where in the last equality \(L(A)=\frac{\mathbb P(A)}{\mathbb Q(A)}\), and we recall that \(L(A)\) is defined in Section~\ref{sec:proof-prop-1.5}. Thus, for any positive sequence \(\epsilon_n\to 0\) we have
\begin{equation}\label{eq:first-control-LR}
	\mathbb P(L(A)\leq \epsilon_n)=\mathbb E_{\mathbb Q}\left[ L(A)\mathbf 1_{\{L(A)\leq \epsilon_n\}} \right]\leq \epsilon_n.
\end{equation}
Now let
\[
\mathcal B(\mathcal H)=\left\{ \widetilde{\mathcal H}\subset \operatorname U_{n,d}:\, | \mathcal H\cap \widetilde{\mathcal H}|\geq \iota s\binom{n}{d} \right\}.
\]
Then we obtain
\begin{equation}
	\begin{split}
		\mu_A(\mathcal B(\mathcal H))
		&= \mu_A(\mathcal B(\mathcal H))\mathbf 1_{\{L(A)\leq \epsilon_n\}}
		+ \mu_A(\mathcal B(\mathcal H))\mathbf 1_{\{L(A)\geq \epsilon_n\}}\\
		&\leq \mathbf 1_{\{L(A)\leq \epsilon_n\}}
		+ \frac{1}{\epsilon_n}\mathbb E_{\mathcal H'\sim \mu}\left[
		\underbrace{\frac{\mathbb P(A\mid \mathcal H')}{\mathbb Q(A)}}_{=:L_{\mathcal H'}(A)}
		\mathbf 1_{\{\mathcal H'\in \mathcal B(\mathcal H)\}}
		\right]\,,
	\end{split}\notag
\end{equation}
where the inequality follows because we used \(L(A)\geq \epsilon_n\) in the second summand. Then we obtain
\begin{align}
    \mathbb P\left(|\mathcal H'\cap \mathcal H|\geq \iota s\binom nd\right)
	&=\mathbb E_\mathbb P[\mu_A(\mathcal B(\mathcal H))] \nonumber \\
	&\leq \mathbb P(L(A)\leq \epsilon_n)
	+\frac{1}{\epsilon_n} \mathbb E_{A\sim \mathbb P} \mathbb E_{\mathcal H'\sim \mu}\Big[ L_{\mathcal H'}(A)\mathbf 1_{\{\mathcal H'\in \mathcal B(\mathcal H)\}} \Big] \nonumber \\
	&\leq \epsilon_n + \frac{1}{\epsilon_n} \mathbb E_{\mathbb P} \mathbb E_{\mathcal H'\sim \mu}\Big[ L_{\mathcal H'}(A)\mathbf 1_{\{ \mathcal H'\in \mathcal B(\mathcal H)\}} \Big]\,,  \label{eq:expansion-via-LR}
\end{align}
where the last inequality is from \eqref{eq:first-control-LR}. It remains to control the second term. To this end, we have the following replica representation
\begin{equation}\label{eq:replica-repre}
	\begin{split}
		& \mathbb E_{\mathbb P} \mathbb E_{\mathcal H'\sim \mu}\Big[
		L_{\mathcal H'}(A)\mathbf 1_{\{\mathcal H'\in \mathcal B(\mathcal H)\}}
		\Big] \\
		=\;& \mathbb E_{\mathcal H\sim \mu}
		\mathbb E_{A\sim \mathbb P(\cdot\mid \mathcal H)}
		\mathbb E_{\mathcal H'\sim \mu}\Big[
		L_{\mathcal H'}(A)\mathbf 1_{\{\mathcal H'\in \mathcal B(\mathcal H)\}}
		\Big] \\
		=\;& \mathbb E_{\mathcal H,\mathcal H'\overset{iid}{\sim} \mu } \bigg[
		\mathbb E_{A\sim \mathbb Q}\Big[ L_{\mathcal H}(A)L_{\mathcal H'}(A) \Big]
		\mathbf 1_{\{|\mathcal H\cap \mathcal H'|\geq \iota s\binom nd\}}
		\bigg].
	\end{split}
\end{equation}
Following the proof of Lemma~\ref{lem-second-moment-relax}, we obtain (recall that \(\mathcal P(\mathcal H)\) denotes the projection of \(\mathcal H\), and here we take \(\alpha=1\))
\begin{equation}\label{eq:est-replicas-ave-over-A}
	\mathbb E_{A\sim \mathbb Q}\Big[ L_{\mathcal H}(A)L_{\mathcal H'}(A) \Big]
	\leq
	\exp\Big(n^{1-\beta+o(1)}\cdot \big| E(\mathcal P(\mathcal H)) \cap E(\mathcal P(\mathcal H')) \big| \Big).
\end{equation}
Moreover, note that
\[
|\mathcal H\cap\mathcal H'|\geq \iota s\binom nd
\qquad\Longrightarrow\qquad
|E(\mathcal P(\mathcal H))\cap E(\mathcal P(\mathcal H'))|
\geq \iota s\binom nd\binom d2.
\]
Therefore, by \eqref{eq:replica-repre}, it suffices to bound
\begin{equation}
	\mathbb E_{\mathcal H,\mathcal H'\overset{iid}{\sim}\mu }\bigg[
	\exp\Big(n^{1-\beta+o(1)}\cdot \big| E(\mathcal P(\mathcal H)) \cap E(\mathcal P(\mathcal H')) \big| \Big)
	\mathbf 1_{\{ |E(\mathcal P(\mathcal H))\cap E(\mathcal P(\mathcal H'))| \geq \iota s\binom nd\binom d2 \}}
	\bigg].
\end{equation}
Recall that we write
\[
Y=Y(\mathcal H,\mathcal H'):=|E(\mathcal P(\mathcal H))\cap E(\mathcal P(\mathcal H'))|.
\]
By Lemma~\ref{lem-est-tail-intersection}, and using that
\[
\iota s\binom nd\binom d2=\Theta(n^{1+\delta})\gg n^{2\delta},
\]
we obtain
\begin{align}
    &\mathbb E_{\mathcal H,\mathcal H'\overset{iid}{\sim}\mu}\Big[\exp\big(n^{1-\beta+o(1)}Y\big)\mathbf 1_{\{Y\geq \iota s \binom nd\binom d2\}}\Big] \nonumber \\
	\leq{}\;& \int_{\iota s\binom nd\binom d2}^\infty
	n^{1-\beta+o(1)}\exp\big(n^{1-\beta+o(1)}t\big)\mathbb P(Y\geq t)\,\mathrm dt \nonumber \\
	\leq{}\;& \int_{\iota s\binom nd\binom d2}^\infty
	n^{1-\beta+o(1)}\exp\big(n^{1-\beta+o(1)}t\big)\exp\left(-\frac{t^2}{n^{2\delta}}\right)\,\mathrm dt \nonumber \\
	={}\;& \int_{n^{1+o(1)}}^\infty
	n^{1-\beta+\delta+o(1)}\exp\big(n^{1-\beta+\delta+o(1)}s\big)\exp(-s^2)\,\mathrm ds \nonumber \\
	={}\;& \exp\big(n^{2-2\beta+2\delta+o(1)}\big)
	\int_{n^{1+o(1)}}^\infty
	\exp\left(-\left(s-\frac{n^{1-\beta+\delta+o(1)}}{2}\right)^2\right)\,\mathrm ds \,.  \label{eq:final-upper-tail-construction}
\end{align}
Recall that we assume
\[
\delta\leq \frac{d-1}{d+1}
\qquad\text{and}\qquad
\beta > \frac{2\delta}{d(d-1)}+\frac{d-2}{d}.
\]
In particular, this implies
\[
-\beta+\delta=-\Omega(1).
\]
Substituting this into \eqref{eq:final-upper-tail-construction}, we conclude that
\begin{equation*}
	\eqref{eq:final-upper-tail-construction}
	\leq
	\exp\big(n^{2-2\beta+2\delta+o(1)}\big)\exp\big(-n^{2+o(1)}\big)
	=
	\exp\big(-n^{2+o(1)}\big).
\end{equation*}
As a consequence,
\[
\mathbb E_\mathbb P\mathbb E_{\mathcal H'\sim \mu}\Big[L_{\mathcal H'}(A)\mathbf 1_{\{\mathcal H'\in \mathcal B(\mathcal H)\}}\Big]
\leq
\exp\big(-n^{2+o(1)}\big).
\]
Combining this with \eqref{eq:expansion-via-LR}, and choosing \(\epsilon_n=o(1)\) appropriately, yields the desired result \eqref{eq-goal-partial-recovery-lower-bound}.

\section{Proof of Proposition~\ref{prop-information-almost-exact-recovery-lower-bound}}{\label{sec:proof-prop-1.9}}

In this section, without loss of generality, we assume $q=0$. Following the approach in \cite{DWXY23}, we only need to prove the following result: suppose we sample $(\mathcal H,A)$ according to $\Pb$ and we sample $\mathcal H'$ according to the posterior distribution of $\mathcal H$ given $A$. Then we have
\begin{equation}{\label{eq-goal-almost-exact-recovery-lower-bound}}
    \Pb\Big(|\mathcal H \triangle \mathcal H'|\leq o(1)\cdot s\tbinom{n}{d}\Big)=o(1) \,.
\end{equation}
To see this, note that for any estimator $\widehat{\mathcal H}=\widehat{\mathcal H}(A)$, we have that $(\widehat{\mathcal H},\mathcal H)$ and $(\widehat{\mathcal H},\mathcal H')$ are equal in law. Combining the fact that 
\begin{align*}
    |\mathcal H \triangle \widehat{\mathcal H}| + |\mathcal H' \triangle \widehat{\mathcal H}| \geq |\mathcal H \triangle \mathcal H'| \,,
\end{align*}
we then have
\begin{align*}
    \Omega(1) \cdot s\tbinom{n}{d} \leq \mathbb E_{\Pb}\big[ |\mathcal H \triangle \mathcal H'| \big] \leq \mathbb E\big[ |\mathcal H \triangle \widehat{\mathcal H}| + |\mathcal H' \triangle \widehat{\mathcal H}| \big] = 2\mathbb E_{\Pb}\big[ |\mathcal H \triangle \widehat{\mathcal H}| \big] \,,
\end{align*}
where the first inequality follows from \eqref{eq-goal-almost-exact-recovery-lower-bound}. This yields Proposition~\ref{prop-information-almost-exact-recovery-lower-bound}. 

We now prove \eqref{eq-goal-almost-exact-recovery-lower-bound}. For any $\mathcal H \in \{ 0,1 \}^{\binom{[n]}{d}}$ and $A \in \{ 0,1 \}^{\binom{[n]}{2}}$, define
\begin{equation}{\label{eq-def-empty-H;A}}
    \mathsf{Empty}(\mathcal H;A)\triangleq\Big\{ \Psi\in\mathcal H: E(\mathcal P(\Psi)) \cap E(A)=\emptyset \Big\} \,.
\end{equation}
Define
\begin{equation}{\label{eq-def-mathcal-G}}
\begin{aligned}
    \mathcal G:=\Big\{ & (\mathcal H,A):|\mathcal H|=(1+o(1))s\tbinom{n}{d}, \mathsf{Empty}(\mathcal H;A) \geq \Omega(1) \cdot s\tbinom{n}{d} ; \\
    & |E(\mathcal P(\mathsf{Empty}(\mathcal H;A)))| \geq (1-o(1)) \tbinom{d}{2} |\mathsf{Empty}(\mathcal H;A)| \Big\} \,.
\end{aligned}
\end{equation}
It is easy to check that $\Pb((\mathcal H,A)\in\mathcal G)=1-o(1)$. In addition, for any $(\mathcal H,A)\in\mathcal G$, denote 
\begin{equation}{\label{eq-def-blur(H;A)}}
    \begin{aligned}
        \mathsf{Blur}(\mathcal H;A)\triangleq\Big\{ &\mathcal H' \in\{0,1\}^{\binom{[n]}{d}}: |\mathcal H'|=|\mathcal H|, \mathcal H \setminus \mathsf{Empty}(\mathcal H;A) \subset \mathcal H' \Big\} \,.
    \end{aligned}
\end{equation}
It is straightforward to check that for $(\mathcal H,A)\in\mathcal G$
\begin{align*}
    \#\mathsf{Blur}(\mathcal H;A) \geq \binom{ \binom{n}{d}-(1+o(1))s\binom{n}{d} }{ |\mathsf{Empty}(\mathcal H;A)| } \geq \exp\Big( \Omega(1)\cdot s\tbinom{n}{d}\log n \Big) \,.
\end{align*}
In addition, for all $\mathcal H'\in\mathsf{Blur}(\mathcal H;A)$ we have
\begin{align*}
    \frac{ \Pb(\mathcal H\mid A) }{ \Pb(\mathcal H'\mid A) } &= \frac{ \Pb(\mathcal H,A) }{ \Pb(\mathcal H',A) } = \frac{ \mu(\mathcal H) \Pb_{\mathcal H}(A) }{ \mu(\mathcal H') \Pb_{\mathcal H'}(A) } = \frac{ \Pb_{\mathcal H}(A) }{ \Pb_{\mathcal H'}(A) } \\
    &= \frac{ p^{|E(A)|} (1-p)^{ |E(\mathcal P(\mathcal H)) \setminus E(A)| } }{ p^{|E(A)|} (1-p)^{ |E(\mathcal P(\mathcal H')) \setminus E(A)| } } \leq \exp\Big( o(1)\cdot s\tbinom{n}{d} \Big) \,,
\end{align*}
where the third equality follows from $|\mathcal H|=|\mathcal H'|$ and the inequality follows from 
\begin{align*}
    &|E(\mathcal P(\mathcal H)) \setminus E(A)|-|E(\mathcal P(\mathcal H')) \setminus E(A)| \\
    =\ &|E(\mathcal{P}(\mathsf{Empty}(\mathcal H;A)))|-|E(\mathcal P(\mathcal H' \cap \mathsf{Empty}(\mathcal H;A))) \setminus E(A)| \\
    \geq\ &(1-o(1))\tbinom{d}{2}|\mathsf{Empty}(\mathcal H;A)| - \tbinom{d}{2}|\mathsf{Empty}(\mathcal H;A)| = - o(1)\cdot s\tbinom{n}{d} \,.
\end{align*}
Thus, we see that for $(\mathcal H,A)\in\mathcal G$
\begin{equation}{\label{eq-est-Pb(H|A)}}
    \Pb(\mathcal H\mid A) \leq \frac{ \exp(o(1)\cdot s\tbinom{n}{d})  }{ |\mathsf{Blur}(\mathcal H;A)| } = \exp\Big( -\Omega(1)\cdot s\tbinom{n}{d} \log n \Big) \,.
\end{equation}
Now we return to the proof of \eqref{eq-goal-almost-exact-recovery-lower-bound}. Clearly we can write the left hand side of \eqref{eq-goal-almost-exact-recovery-lower-bound} as
\begin{align}
    & \sum_{ \substack{ \mathcal H,\mathcal H',A \\ |\mathcal H \triangle \mathcal H'|\leq o(1)\cdot s\binom{n}{d} } } \frac{ \Pb(\mathcal H,A)\Pb(\mathcal H',A) }{ \Pb(A) } \nonumber \\
    \leq\ & \sum_{ \substack{ \mathcal H,\mathcal H';A:(\mathcal H,A), (\mathcal H',A) \in\mathcal G \\ |\mathcal H \triangle \mathcal H'|\leq o(1)\cdot s\binom{n}{d} } } \frac{ \Pb(\mathcal H,A)\Pb(\mathcal H',A) }{ \Pb(A) } + 2\sum_{ \substack{ \mathcal H,\mathcal H';A \\ (\mathcal H,A)\not\in\mathcal G } } \frac{ \Pb(\mathcal H,A)\Pb(\mathcal H',A) }{ \Pb(A) } \label{eq-almost-recovery-relax-1}
\end{align}
Note that
\begin{align}
    \sum_{ \substack{ \mathcal H,\mathcal H';A \\ (\mathcal H,A)\not\in\mathcal G } } \frac{ \Pb(\mathcal H,A)\Pb(\mathcal H',A) }{ \Pb(A) } = \sum_{ \substack{ (\mathcal H,A)\not\in\mathcal G } } \Pb(\mathcal H,A) \sum_{\mathcal H'} \Pb(\mathcal H'\mid A) \leq \Pb((\mathcal H,A)\not\in\mathcal G)=o(1) \,. \label{eq-almost-recovery-relax-2}
\end{align}
In addition, we have
\begin{align}
    &\sum_{ \substack{ \mathcal H,\mathcal H';A:(\mathcal H,A), (\mathcal H',A) \in\mathcal G \\ |\mathcal H \triangle \mathcal H'|\leq o(1)\cdot s\binom{n}{d} } } \frac{ \Pb(\mathcal H,A)\Pb(\mathcal H',A) }{ \Pb(A) } = \sum_{ \substack{ \mathcal H,\mathcal H';A:(\mathcal H,A), (\mathcal H',A) \in\mathcal G \\ |\mathcal H \triangle \mathcal H'|\leq o(1)\cdot s\binom{n}{d} } } \Pb(\mathcal H',A) \Pb(\mathcal H\mid A) \nonumber \\
    \overset{\eqref{eq-est-Pb(H|A)}}{\leq}\ & \exp\Big( -\Omega(1)\cdot s\tbinom{n}{d} \log n \Big) \cdot \sum_{ \substack{ \mathcal H,\mathcal H';A:(\mathcal H,A),(\mathcal H',A) \in\mathcal G \\ |\mathcal H \triangle \mathcal H'|\leq o(1)\cdot s\binom{n}{d} } } \Pb(\mathcal H',A) \nonumber \\
    \leq\ & \exp\Big( -\Omega(1)\cdot s\tbinom{n}{d} \log n \Big) \cdot \sum_{ \substack{ \mathcal H';A \\ (\mathcal H',A) \in \mathcal G } } \Pb(\mathcal H',A) \#\Big\{ \mathcal H: |\mathcal H\triangle \mathcal H'|\leq o(1)\cdot s\tbinom{n}{d} \Big\} \nonumber \\
    \leq\ & \exp\Big( -\Omega(1)\cdot s\tbinom{n}{d} \log n \Big) \cdot \exp\Big( o(1)\cdot s\tbinom{n}{d} \log n \Big)  = o(1) \,, \label{eq-almost-recovery-relax-3}
\end{align}
where the third inequality follows from for $|\mathcal H'|=(1+o(1))s\tbinom{n}{d}$
\begin{align*}
    \#\Big\{ \mathcal H: |\mathcal H\triangle \mathcal H'|\leq o(1)\cdot s\tbinom{n}{d} \Big\} \leq 2^{|\mathcal H'|} \cdot \binom{ \binom{n}{d} }{ o(1)\cdot s\binom{n}{d} } = \exp\Big( o(1)\cdot s\tbinom{n}{d} \log n \Big) \,.
\end{align*}
And the result follows directly by plugging \eqref{eq-almost-recovery-relax-2} and \eqref{eq-almost-recovery-relax-3} into \eqref{eq-almost-recovery-relax-1}.

\section*{Acknowledgment}

The main part of the work was carried out when Q.~Xu was at Peking University. S.~Gong and Z.~Li are partially supported by National Key R$\&$D Program of China  (Project No. 2023YFA1010103) and NSFC Key Program (Project No.12231002).

\appendix

\section{Detection with a modified null distribution}{\label{sec:alternative-detection-prob}}

In the main text, we have cited the presence of a detection-recovery gap as a source of difficulty for obtaining tight thresholds for recovery. One might wonder whether this can be remedied simply by choosing a better null distribution $\Qb$ that is harder to distinguish from the planted distribution $\Pb$. In this section, we will argue that this approach does not work. Recall Definition~\ref{def-noisy-projection}. Define $\widetilde{\Qb}=\widetilde{\Qb}_{n}$ to be the \ER graph with edge-density 
\begin{equation}{\label{eq-def-widetilde-q}}
    \widetilde{q} = \tfrac{1}{\binom{n}{2}} \Big( (p-q)s\tbinom{n}{d}\tbinom{d}{2}  \Big) + q \,.
\end{equation}
Clearly $\widetilde{\Qb}$ and $\Pb$ have the same expectational edge-density. However, we will show that it is still possible to distinguish $\Pb$ from $\widetilde{\Qb}$ when $\delta,\beta<\tfrac{d-1}{d+1}$. Consider the ``clique counting'' statistic 
\begin{equation}{\label{eq-def-statistics-g}}
    g(A)=\sum_{\Psi\in\operatorname{U}}\mathbf 1_{ \{\mathcal P(\Psi) \subset A\} } \,.
\end{equation}
\begin{proposition}
    Suppose $\delta,\beta<\tfrac{d-1}{d+1}$. We then have
    \begin{align*}
        &\Pb\Big( g(A) \geq q^{\binom{d}{2}}\tbinom{n}{d} + \tfrac{1}{2} ps\tbinom{n}{d} \Big) = 1-o(1) \,, \\
        &\widetilde{\Qb}\Big( g(A) < q^{\binom{d}{2}}\tbinom{n}{d} + \tfrac{1}{2} ps\tbinom{n}{d} \Big) = 1-o(1) \,.
    \end{align*}
\end{proposition}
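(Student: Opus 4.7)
The plan is a Chebyshev analysis of the clique-count statistic $g(A)$ under both measures, combined with a Taylor expansion of $\tilde{q}^{\binom{d}{2}}$ around $q^{\binom{d}{2}}$ that isolates the role of the assumption $\delta,\beta<(d-1)/(d+1)$. Under $\widetilde{\Qb}$ the graph is a pure \ER random graph, so $\mathbb{E}_{\widetilde{\Qb}}[g(A)]=\binom{n}{d}\tilde{q}^{\binom{d}{2}}$ and the moment calculations reduce to classical $d$-clique counts; under $\Pb$, conditioning on $\mathcal{H}$ makes $g(A)$ a polynomial in independent Bernoullis to which standard concentration applies.

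For the $\widetilde{\Qb}$ side, write $\tilde{q}=q+r$ with $r=(p-q)s\binom{n}{d}\binom{d}{2}/\binom{n}{2}=\Theta(psn^{d-2})$ and expand
\begin{equation*}
\binom{n}{d}\big(\tilde{q}^{\binom{d}{2}}-q^{\binom{d}{2}}\big)=\binom{n}{d}\sum_{k=1}^{\binom{d}{2}}\binom{\binom{d}{2}}{k}q^{\binom{d}{2}-k}r^{k}.
\end{equation*}
The $k=1$ term equals $\binom{d}{2}\binom{n}{d}q^{\binom{d}{2}-1}r$; requiring this to be $\ll ps\binom{n}{d}$ reduces precisely to $(1-\beta)(\binom{d}{2}-1)>d-2$, i.e.\ $\beta<(d-1)/(d+1)$. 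The $k=\binom{d}{2}$ term $\binom{n}{d}r^{\binom{d}{2}}$ yields the parallel inequality $(2-\alpha-\delta)(\binom{d}{2}-1)>d-2$, which in the relevant regime $p=\Theta(1)$ collapses to $\delta<(d-1)/(d+1)$; the intermediate $k$ are controlled by linearity in $k$. For the variance I would use the standard clique-count decomposition
\begin{equation*}
\operatorname{Var}_{\widetilde{\Qb}}[g(A)]\lesssim \sum_{k=2}^{d}\binom{n}{d}\binom{d}{k}\binom{n-d}{d-k}\tilde{q}^{\,2\binom{d}{2}-\binom{k}{2}},
\end{equation*}
indexed by the vertex overlap $k$ of two copies of $\Psi$, and verify case by case that each term is $o((ps\binom{n}{d})^{2})=o(n^{2(\alpha+\delta)})$ under $\delta,\beta<(d-1)/(d+1)$. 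Chebyshev then delivers the null half of the claim.

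For the $\Pb$ side, decompose $\mathbb{E}_{\Pb}[g(A)]=\binom{n}{d}\mathbb{E}_{\mathcal{H}}\big[p^{X_{\Psi}}q^{\binom{d}{2}-X_{\Psi}}\big]$ where $X_{\Psi}=|E(\mathcal{P}(\Psi))\cap E(\mathcal{P}(\mathcal{H}))|$. Split by whether $\Psi\in\mathcal{H}$: the event $\Psi\in\mathcal{H}$ (probability $s$) forces $X_{\Psi}=\binom{d}{2}$ and contributes $sp^{\binom{d}{2}}\binom{n}{d}$; in the complement, a union bound gives $X_{\Psi}=0$ with probability $1-O(n^{-1+\delta})$ so the contribution is $(1-o(1))q^{\binom{d}{2}}\binom{n}{d}$, and the correction from $X_{\Psi}\ge 1$ is negligible precisely when $\beta<(d-1)/(d+1)$. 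In the regime of interest $p=\Theta(1)$, the signal $sp^{\binom{d}{2}}\binom{n}{d}$ comfortably exceeds the threshold excess $\tfrac12 ps\binom{n}{d}$. Concentration of $g(A)$ under $\Pb$ follows by conditioning on $\mathcal{H}$: the conditional distribution is a polynomial in independent Bernoullis, and the conditional mean concentrates in $\mathcal{H}$ because $|\mathcal{H}|$ and $|E(\mathcal{P}(\mathcal{H}))|$ do so, as already shown in Section~\ref{sec:proof-prop-1.4}.

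The main obstacle is the variance calculation under $\widetilde{\Qb}$: two cliques sharing few vertices can still share many edges, so each intersection size $k\in\{2,\ldots,d\}$ contributes a different scaling in $n$, and each must be verified to lie below the squared signal $(ps\binom{n}{d})^{2}$ individually. Several of these scaling inequalities turn out to be saturated exactly at $\delta,\beta=(d-1)/(d+1)$, so some algebraic bookkeeping is required to match exponents across cases. By contrast, the $\Pb$ side is more routine: the mean is a direct mixture computation, and concentration follows from standard Efron--Stein or Talagrand inputs applied to $g(A)\mid\mathcal{H}$.
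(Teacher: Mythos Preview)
Your approach is essentially the paper's: on the $\widetilde{\Qb}$ side both use Chebyshev with the standard clique-overlap variance decomposition together with a binomial expansion of $\widetilde{q}^{\binom{d}{2}}$; the paper streamlines by setting $\kappa=\max\{\delta,\beta\}$, noting $\widetilde{q}=n^{-1+\kappa+o(1)}$, and treating the very sparse range $\kappa<(d-3)/(d-1)$ separately via Markov rather than checking each overlap size directly. On the $\Pb$ side the paper's argument is a one-line hand-wave, so your mixture decomposition of $\mathbb E_{\Pb}[g(A)]$ plus conditional concentration is in fact more detailed than what the paper supplies. One caution: your assertion that $sp^{\binom{d}{2}}\binom{n}{d}$ ``comfortably exceeds'' $\tfrac12 ps\binom{n}{d}$ requires $p^{\binom{d}{2}-1}>\tfrac12$, which is not implied by $p=\Theta(1)$ alone; this is a defect of the specific threshold constant stated in the proposition (which the paper's terse $\Pb$ argument does not address either) rather than of your method, and any threshold between the two means would work.
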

\begin{proof}
    Denote $\kappa=\max\{\delta,\beta\}$ and recall from \eqref{eq-def-widetilde-q} that $\widetilde{q}=n^{-1+\kappa+o(1)}$. When $\kappa<\tfrac{d-3}{d-1}$, it can be easily checked using Markov's inequality that $\widetilde{\Qb}(g(A)=0)=1-o(1)$. When $\kappa \geq \tfrac{d-3}{d-1}$, we have
    \begin{align*}
        \operatorname{Var}_{\widetilde{\Qb}}(g(A))= \sum_{\Psi,\Phi\in\operatorname{U}} \operatorname{Cov}_{\widetilde{\Qb}}\Big( \mathbf 1_{ \{\mathcal P(\Psi) \subset A\} }, \mathbf 1_{ \{\mathcal P(\Phi) \subset A\} } \Big) \,.
    \end{align*}
    Note that for $2\leq m \leq d$, we have $\#\{ \Psi,\Phi: |V(\Psi)\cap V(\Phi)|=m \}=n^{2d-m+o(1)}$ and for $|V(\Psi)\cap V(\Phi)|=m$ we have
    \begin{align*}
        \operatorname{Cov}_{\widetilde{\Qb}}\big( \mathbf 1_{ \{\mathcal P(\Psi) \subset A\} }, \mathbf 1_{ \{\mathcal P(\Phi) \subset A\} } \big) \leq \widetilde{q}^{ 2\binom{d}{2}-\binom{m}{2} } \,.
    \end{align*}
    Thus, we have
    \begin{align*}
        \operatorname{Var}_{\widetilde{\Qb}}(g(A))= \mathbb E_{\widetilde{\Qb}}[g(A)]^2 \cdot \sum_{2 \leq m \leq d} n^{ -m+(1-\kappa)\binom{m}{2}+o(1) } \,.
    \end{align*}
    Note that when $\kappa>\tfrac{d-3}{d-1}$, we have the maximum of $-m+(1-\kappa)\binom{m}{2}$ where $2 \leq m \leq d$ is taken by either $m=2$ or $m=d$. Thus, 
    \begin{align*}
        \operatorname{Var}_{\widetilde{\Qb}}(g(A))= \mathbb E_{\widetilde{\Qb}}[g(A)]^2 \cdot  n^{-\min\{(1+\kappa),d-\binom{d}{2}(1-\kappa)\}+o(1)} \,.
    \end{align*}
    This yields that
    \begin{align*}
        \operatorname{Var}_{\widetilde{\Qb}}(g(A))^{\tfrac{1}{2}} \leq n^{ d-\binom{d}{2}(1-\kappa)-\tfrac{1}{2}\min\{(1+\kappa),d-\binom{d}{2}(1-\kappa)\}+o(1) } \leq n^{1+\delta-\Omega(1)} \,,
    \end{align*}
    where the last inequality follows from our assumption that (recall $\kappa=\max\{ \delta,\beta \}$)
    \begin{align*}
        \tfrac{d-\tbinom{d}{2}(1-\kappa)}{2}, \  d-\tbinom{d}{2}(1-\kappa)-\tfrac{1+\kappa}{2} = 1-\Omega(1) \mbox{ when } \beta,\delta<\tfrac{d-1}{d+1} \,.
    \end{align*}
    In addition, we have 
    \begin{align*}
        \mathbb E_{\widetilde{\Qb}}[g(A)] = \widetilde{q}^{\binom{d}{2}}\tbinom{n}{d} = q^{\binom{d}{2}}\tbinom{n}{d} + O(1) \cdot \tbinom{n}{d} \sum_{k=0}^{\binom{d}{2}-1} q^{k} (\widetilde{q}-q)^{\binom{d}{2}-k} 
    \end{align*}
    Note that $\widetilde{q}-q=n^{-1+\delta+o(1)}$ and $q=n^{-1+\beta+o(1)}$, thus we have for all $0 \leq k \leq \tbinom{d}{2}-1$
    \begin{align*}
        \tbinom{n}{d} q^{k} (\widetilde{q}-q)^{\binom{d}{2}-k} &= n^{ d-k(1-\beta)-(\binom{d}{2}-k)(1-\delta)+o(1) } \\
        &\leq n^{d-(\tbinom{d}{2}-1)(1-\kappa)-(1-\delta)+o(1)} \ll ps\tbinom{n}{d} = n^{(1+\delta)+o(1)} \,,
    \end{align*}
    where the last inequality follows from the fact that 
    \begin{align*}
        d-(\tbinom{d}{2}-1)(1-\kappa)-(1-\delta)<(1+\delta) \mbox{ when } \delta,\beta<\tfrac{d-1}{d+1}  \,.
    \end{align*}
    In conclusion, we have shown that 
    \begin{align*}
        \mathbb E_{\widetilde{\Qb}}[g(A)] = q^{\binom{d}{2}}\tbinom{n}{d} + o(1) \cdot ps\tbinom{n}{d} \mbox{ and } \operatorname{Var}_{\widetilde{\Qb}}[g(A)]^{\frac{1}{2}}= o(1)\cdot ps\tbinom{n}{d} \,.
    \end{align*}
    Thus from standard Chebyshev's inequality we have
    \begin{align*}
        \widetilde{\Qb}\Big( g(A) < q^{\binom{d}{2}}\tbinom{n}{d} + \tfrac{1}{2} ps\tbinom{n}{d} \Big) = 1-o(1) \,.
    \end{align*}
    In addition, note that under $\Pb$ we have $|\mathcal H|\geq 0.9ps\tbinom{n}{d} \gg q^{\binom{d}{2}} \tbinom{n}{d}$ with probability $1-o(1)$ when $\beta<\tfrac{d-1}{d+1}$. Thus, it is clear that
    \begin{equation*}
        \Pb\Big( g(A) \geq q^{\binom{d}{2}}\tbinom{n}{d} + \tfrac{1}{2} ps\tbinom{n}{d} \Big) = 1-o(1) \,. \qedhere
    \end{equation*}
\end{proof}

\small
\bibliographystyle{alpha}
\bibliography{ref}

\end{document}